\renewcommand{\digamma}{{\textit{\bfseries\foreignlanguage{greek}{ϝ}}}}
\newtheorem{thm}{Theorem}[section]
\newtheorem{lemma}[thm]{Lemma}
\newtheorem{prop}[thm]{Proposition}
\newtheorem{introthm}{Theorem}
\newtheorem*{prop*}{Proposition}
\theoremstyle{remark}
\newtheorem{rk}[thm]{Remark}
\newtheorem{constr}[thm]{Construction}
\newtheorem{convention}[thm]{Convention}
\newtheorem*{claim*}{Claim}
\renewcommand\qedsymbol{$\triangle$}}
\theoremstyle{definition}
\newtheorem{defi}[thm]{Definition}
\numberwithin{equation}{section}
\newcommand{\nerve}{\textup{N}}
\newcommand{\h}{\textup{h}}
\newcommand{\cat}[1]{\textbf{\textup{#1}}}
\newcommand{\op}{{\textup{op}}}
\newcommand{\blank}{{\textup{--}}}
\newcommand{\pr}{{\textup{pr}}}
\newcommand{\Hom}{{\textup{Hom}}}
\newcommand{\id}{{\textup{id}}}
\newcommand{\ev}{{\textup{ev}}}
\newcommand{\colim}{\mathop{\textup{colim}}\nolimits}
\newcommand{\Fun}{\textup{Fun}}
\newcommand{\maps}{\mathord{\textup{maps}}}
\newcommand{\Ob}{\mathop{\textup{Ob}}}
\newcommand{\core}{\mathop{\textup{core}}}
\newcommand{\ppo}{\mathbin{\raise.25pt\hbox{\scalebox{.67}{$\square$}}}}
\newcommand{\diag}{{\textup{diag}}}
\newcommand{\Ho}{{\textup{Ho}}}
\newcommand{\Agl}{A^{\textup{gl}}}
\newcommand{\Aut}{\textup{Aut}}
\newcommand{\tni}{\mathord{\reflectbox{$\int$}}}
\newcommand{\F}{{\mathbb F}}
\newcommand{\AglGamma}{{\mathbb A}_{\textup{gl}}}
\newcommand{\Exc}{{\textup{Exc}}}
\newcommand{\Sp}{{\mathscr S\!p}}
\newcommand{\Cc}{{\mathcal C}}
\newcommand{\Dd}{{\mathcal D}}
\renewcommand{\int}{{\smallint}}
\let\del=\partial
\let\smashp=\wedge
\let\phi=\varphi
\def\twocell[#1]{\arrow[#1, dash, phantom, "\Rightarrow"{scale=1.125, yshift=-.4pt, description, allow upside down, sloped, inner sep=0pt}]}
\title[Global homotopy theory via spectral Mackey functors]{Global homotopy theory\\ via spectral Mackey functors}
\author{Tobias Lenz}
\address{\null\kern-\parindent Mathematisches Institut, Rheinische Friedrich-Wilhelms-Universit\"at Bonn, Endenicher Allee 60, 53115 Bonn, Germany}
\begin{document}
\begin{abstract}
We show that Hausmann's model of global stable homotopy theory in terms of symmetric spectra is equivalent to the $\infty$-category of spectral Mackey functors in the sense of Barwick on a certain \emph{global effective Burnside category}. We moreover provide an analogous description of Schwede's ultra-commutative monoids as space-valued global Mackey functors.
\end{abstract}

\maketitle

\section*{Introduction}
One of the defining features of \emph{equivariant homotopy theory} that distinguishes it from the na\"ive homotopy theory of $G$-objects in spaces or spectra is the existence of \emph{genuine fixed points}, which are typically different from the usual homotopy fixed points. In the unstable setting, \emph{Elmendorf's Theorem} \cite{elmendorf} makes precise in which sense a $G$-space can be understood in terms of its fixed point spaces and the restriction and conjugation maps between them: for every group $G$, sending a $G$-space to the collection of its fixed point spaces together with the above structure maps provides an equivalence between unstable $G$-equivariant homotopy theory and the $\infty$-category of presheaves on the category of finite transitive $G$-sets.

Stably, the theory becomes more complicated: in particular, for any finite group $G$, the fixed points of a \emph{genuine $G$-spectrum} come with covariant transfers along subgroup inclusions in addition to the above contravariant functoriality; on the level of homotopy groups the interaction between these two directions of functoriality is governed by the so-called \emph{Mackey double coset formula}, giving the homotopy groups the structure of a \emph{$G$-Mackey functor}. It is then natural to ask whether we can also understand genuine $G$-spectra in terms of their fixed points together with these restriction and transfer maps. However, while the Schwede-Shipley Theorem \cite{schwede-shipley} provides a model of genuine stable $G$-equivariant homotopy theory in terms of spectral presheaves for purely abstract reasons, the resulting indexing category is a priori very far from having an algebraic or combinatorial description, and in particular it is not clear in which sense it encodes restrictions, transfers, and a `coherent double coset formula' between them.

Because of this, Guillou and May \cite{guillou-may-ps} constructed an alternative (spectrally enriched) indexing category built from a $2$-category of spans of finite $G$-sets by `local higher group completion,' and provided an equivalence between spectral presheaves on this indexing category and genuine stable $G$-equivariant homotopy theory, which we can view as a stable analogue of Elmendorf's Theorem. Motivated by this, Barwick \cite{barwick-mackey} developed a general theory of \emph{spectral Mackey functors} formalizing the idea of functors with both covariant and contravariant functoriality in a suitable base ($\infty$-)category $\mathscr I$, together with higher homotopies encoding a Mackey double coset formula between these two directions. Specializing Barwick's theory to the ordinary category of finite $G$-sets yields a theory of \emph{spectral $G$-Mackey functors}; a proof that this is equivalent to classical genuine stable $G$-equivariant homotopy theory has been sketched by Nardin \cite{nardin-thesis}, and a full proof (along different lines) appears in work of Clausen, Mathew, Naumann, and Noel \cite{clausen-mathew-naumann-noel}.

In this paper, we are concerned with the analogous story for \emph{global homotopy theory} in the sense of \cite{schwede-book,hausmann-global}. Roughly and intuitively speaking, global objects are compatible families of genuine $G$-equivariant objects for all finite groups $G$, and as a concrete manifestation of this slogan they come with genuine fixed points for all finite groups $G$. In the unstable case, work of Körschgen \cite{koerschgen} and Schwede \cite{schwede-orbi} provides an analogue of Elmendorf's Theorem, identifying global spaces with presheaves on the $2$-category of finite groups, all homomorphisms, and conjugations.\footnote{In fact, both Schwede and Körschgen work in the more general context of global homotopy theory with respect to compact Lie groups.} Similarly to the $G$-equivariant setting, the corresponding stable theory is richer: in particular, global spectra again admit covariant transfers along injective homomorphisms in addition to the contravariant functoriality inherited from the unstable world, and on the level of homotopy groups there is an analogue of the Mackey double coset formula relating the two, making the homotopy groups into so-called \emph{global (Mackey) functors}.

It is then again natural to ask whether we can understand global stable homotopy theory via Barwick's framework, and indeed there is a natural candidate for a theory of \emph{global spectral Mackey functors}. However, while several sources \cite{parametrized-intro, berman-thesis} mention the existence of an equivalence between global spectral Mackey functors and global spectra, so far no proof of this had appeared in the literature and the equivariant proof from \cite{clausen-mathew-naumann-noel} can not be immediately adapted to the global setting. The purpose of this paper is to end this unpleasant state of affairs: we will explain how one can apply Barwick's machinery to the $2$-category of finite groupoids to obtain a notion of \emph{global spectral Mackey functors}, and we prove:

\begin{introthm}[{Theorem~\ref{thm:main-thm}}]\label{introthm:mackey-vs-spectra}
There exists an equivalence between the $\infty$-category of global spectral Mackey functors and the $\infty$-category of global spectra.
\end{introthm}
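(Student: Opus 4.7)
My plan is to exhibit an explicit comparison functor $\Mack$ and show it is an equivalence by a generators argument. First, I would set up the global effective Burnside quasi-category $\Agl$ along Barwick's lines: its objects are finite groupoids and its morphisms are spans $\mathcal G \leftarrow \mathcal H \to \mathcal K$ in which the backward leg is a faithful functor (i.e., injective on automorphism groups), encoding restriction along injective homomorphisms together with their transfers. By Barwick's machinery, the quasi-category $\Fun^{\times}(\Agl, \textup{Sp})$ of product-preserving functors is then stable and presentable, and is the intended target.

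The functor $\Mack$ itself would send a global spectrum $X$ to the presheaf whose value on a finite groupoid $\mathcal G$ is the derived mapping spectrum from $\Sigma^\infty_+ B\mathcal G$ into $X$---equivalently, the $\mathcal G$-fixed point spectrum of $X$. Contravariant functoriality along arbitrary maps of groupoids is inherited from the unstable theory, while covariant functoriality along the \emph{faithful} maps (the transfer direction) should exist because global spectra satisfy a Wirthm\"uller-type identification of the left and right adjoints of restriction along injective homomorphisms. The required (co)cartesian condition on spans---a homotopy-coherent double coset formula---would be verified directly using the compatibility of these transfers with arbitrary base change in the groupoid direction.

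To conclude, I would argue that $\Mack$ preserves colimits and that it sends the generators $\Sigma^\infty_+ B\mathcal G$ on the spectrum side to the representable spectral Mackey functors $\Agl(\mathcal G, \blank) \otimes \mathbb S$. Since both sides are compactly generated under colimits by these objects, it then suffices to show that $\Mack$ is fully faithful on this generating set. This last step is where I expect the bulk of the technical work to lie: one must identify the derived endomorphism spectrum of $\Sigma^\infty_+ B\mathcal G$ in Hausmann's model with the Barwick-style spectrum of spans of finite $\mathcal G$-groupoids. This requires a tom Dieck-style splitting in the global setting together with a homotopy-coherent identification of the resulting data with the Burnside side.

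To carry out this comparison coherently, I expect one must replace $\Agl$ by a spectrally or topologically enriched strict model, rigidify $\Fun^{\times}(\Agl, \textup{Sp})$ to a model category of enriched presheaves, and then construct a Quillen equivalence to Hausmann's category of symmetric spectra via genuine fixed points (in the spirit of Guillou--May's $G$-equivariant approach). This strictification step---rather than any single Burnside-ring calculation---is the main obstacle I anticipate, since point-set models for spectral Mackey functors are notoriously delicate and Hausmann's symmetric-spectrum model of global stable homotopy theory does not come equipped with a natural orbit-indexing structure on the nose.
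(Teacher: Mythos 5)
Your proposal takes a genuinely different route from the paper and runs into precisely the obstacle you anticipate. You propose to build $\Mack$ directly by sending a global spectrum to its genuine fixed-point spectra and then installing the Mackey structure by hand via Wirthm\"uller transfers, verifying the homotopy-coherent double coset formula explicitly, and finally rigidifying $\Fun^\times(\Agl,\mathscr Sp)$ to a spectrally enriched model category so as to produce a Quillen equivalence in the style of Guillou--May. The paper never constructs the transfers and never verifies the double coset formula: $\Mack$ is simply the composite of the spectral Yoneda embedding $y^{\cat{ST}}\colon\cat{Spectra}_{\textup{global}}^\infty\to\Fun^\oplus\big((\cat{Spectra}_{\textup{global}}^\infty)^\op,\mathscr Sp\big)$ with restriction along a single functor $\cat{K}_{\textup{gl}}\circ\core\circ\Psi\colon\Agl\to(\cat{Spectra}_{\textup{global}}^\infty)^\op$, where $\Psi$ is the ``unfurled'' $2$-functor that identifies $(\Agl)^\op$ with the $(2,1)$-category $\mathcal A_{\textup{gl}}$ of symmetric monoidal groupoids $\core\h\F_{BH}\simeq\mathfrak F_H$ (Theorem~\ref{thm:Agl-ord-to-mathcal}). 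Functoriality along spans is then automatic, and the agreement with the usual homotopy-theoretic restrictions and transfers is checked a posteriori (Theorem~\ref{thm:mack-vs-homotopy}) rather than being input to the construction. The rigidification you expect to need is avoided entirely by the unenriched Schwede--Shipley theorem (Theorem~\ref{thm:schwede-shipley-unenriched}), proved via the formalism of implicit enrichment: for a stable, presentable quasi-category with connective compact generators, the Yoneda embedding lifts essentially uniquely to a spectral one, so no strict point-set model of spectral presheaves is required.

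The other substantive divergence is in the key computation. You would identify the derived endomorphism spectrum of $\Sigma^\infty_+ B\mathcal G$ with a Barwick-style spectrum of spans via a global tom Dieck splitting. The paper instead proves that the global algebraic $K$-theory functor $\cat{K}_{\textup{gl}}$ is a \emph{local group completion} from $\mathcal A_{\textup{gl}}$ into the subcategory $\mathscr A_{\textup{gl}}\subset\cat{Spectra}_{\textup{global}}^\infty$ of generators (Theorem~\ref{thm:K-gl-local-gc}), via global $\Gamma$-spaces and the global Barratt--Priddy--Quillen theorem; since $\mathscr Sp$ is additive, restriction along a local group completion is an equivalence on $\Fun^\oplus(-,\mathscr Sp)$. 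These two computations carry essentially the same information---that the mapping groupoids of spans group-complete to the relevant mapping spectra---but the $K$-theoretic route packages all the coherence into a single functor applied levelwise, which is what makes the argument tractable. As written, your proposal leaves the double-coset coherence and the strictification as open problems; the paper's contribution is a way of organizing the argument so that neither ever has to be confronted directly.
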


We will actually deduce this theorem from a `non-group-completed' comparison, describing Schwede's \emph{ultra-commutative monoids} \cite{schwede-book} (a `genuine' global version of $E_\infty$-monoids, again supporting the appropriate notion of transfers) in terms of \emph{space-valued} Mackey functors:

\begin{introthm}[{Theorem~\ref{thm:ucom}}]\label{introthm:ucom}
    There exists an equivalence between the $\infty$-category of global space-valued Mackey functors and the $\infty$-category of ultra-commutative monoids.
\end{introthm}

\subsection*{Historical remark}
The first version of this paper appeared on the arXiv in early 2022 under the name `Global algebraic $K$-theory is Swan $K$-theory.' As the title indicates, this version contained another main result, describing the image of Schwede's \emph{global algebraic $K$-theory spectra} \cite{schwede-k-theory} under the above equivalence; a generalization of this result will instead appear as a separate paper \cite{swan-two}.

In the meantime, Theorems~\ref{introthm:mackey-vs-spectra} and \ref{introthm:ucom} above have been reproven and generalized in joint work of Cnossen, Linskens, and myself \cite{CLL_Global, CLL_Spans}. This alternative proof proceeds by establishing a universal property for global spectra (and more generally $G$-global spectra in the sense of \cite{g-global}) in the setting of \emph{parametrized higher category theory} \cite{parametrized-intro} and then showing that ($G$-)global spectral Mackey functors possess the same universal property. While this is a more conceptual approach (and yields a stronger result), the proof is quite involved, and the above two articles together amount to almost 200 pages. I therefore think there is value in having a reasonably short elementary proof of Theorems~\ref{introthm:mackey-vs-spectra} and~\ref{introthm:ucom}, in particular since the techniques of the present paper are general enough to apply to other contexts. For example, since the first version of this paper appeared on the arXiv, Marc has used similar ideas to establish Mackey functor models of $N_\infty$-monoids \cite{marc-Noo}.

\subsection*{Outline}
In Section~\ref{sec:gamma} we recall our reference model of `global commutative monoids,' and we use a Morita type argument to show that it can be equivalently described as product-preserving presheaves on a certain (mysterious) $\infty$-category $\AglGamma$. Section~\ref{sec:burnside} then constructs the global effective Burnside category $A^\text{gl}$ using Barwick's machinery, and does most of the heavy lifting of the present paper by showing that $\AglGamma$ is equivalent to the opposite of $A^\text{gl}$, yielding the proof of Theorem~\ref{introthm:ucom}. Building on this and the relation between global spectra and ultra-commutative monoids established in \cite{g-global}, we then prove Theorem~\ref{introthm:mackey-vs-spectra} in Section~\ref{sec:spectra}.

\subsection*{Acknowledgements}
I would like to thank Bastiaan Cnossen and Sil Linskens for helpful conversations and Stefan Schwede for useful comments on a very early version of this article.

I am moreover grateful to the Isaac Newton Institute for Mathematical Sciences for support and hospitality during the programme `Homotopy harnessing higher structures' in summer 2018, when work on this article began. The first version of this paper was completed while I was in residence at Institut Mittag-Leffler in Djursholm, Sweden, in early 2022 as a participant of the program `Higher algebraic structures in algebra, topology and geometry,' supported by the Swedish Research Council under under grant no.~2016-06596. Finally, I would like to thank the Max Planck Institute for Mathematics in Bonn for their hospitality and support during all the years in between.

Currently, the author is an associate member of the Hausdorff Center for Mathematics
at the University of Bonn (DFG GZ 2047/1, project ID 390685813).

\section{Global special \texorpdfstring{$\Gamma$}{Γ}-spaces}\label{sec:gamma}
In this section, we will introduce the model category of \emph{global $\varGamma$-spaces} and its full subcategory of \emph{special global $\varGamma$-spaces}, whose objects should be viewed as encoding \emph{genuinely commutative global monoids}. As the main result of this section, we will show that special global $\Gamma$-spaces can be equivalently described as product preserving presheaves on a certain (mysterious) $\infty$-category $\AglGamma$.

\subsection{A reminder on global spaces} We begin by recalling our reference model of \emph{unstable global homotopy theory} \cite[Chapter~1]{schwede-book}, and more generally \emph{unstable $G$-global homotopy theory} in the sense of \cite[Chapter~1]{g-global}. Crucial to this is a certain simplicial monoid that we call the \emph{universal finite group}:

\begin{defi}\label{def:universe}
Let $\mathcal M$ be the monoid of self-injections of the countably infinite set $\omega=\{0,1,\dots\}$. We let $E$ denote the right adjoint of the functor $\cat{SSet}\to\cat{Set}$ sending a simplicial set to its set of vertices, and we call the simplicial set $E\mathcal M$ with the induced monoid structure the \emph{universal finite group}.
\end{defi}

Explicitly, $(E\mathcal M)_n=\mathcal M^{1+n}$ with the obvious functoriality and with pointwise multiplication. Moreover, $E\mathcal M$ is canonically isomorphic to the nerve of the indiscrete category with object set $\mathcal M$, which we will again denote by $E\mathcal M$.

\begin{defi}
We call a finite subgroup $H\subset\mathcal M$ \emph{universal} if $\omega$ with $H$ acting via the restriction of the tautological $\mathcal M$-action is a \emph{complete $H$-set universe}, i.e.~every finite (or, equivalently, every countable) $H$-set embeds into $\omega$ equivariantly.
\end{defi}

\begin{thm}\label{thm:g-global-model-structure-EM}
Let $G$ be any discrete group. Then the category $\cat{$\bm{E\mathcal M}$-$\bm G$-SSet}$ of simplicial sets with an action of the simplicial monoid $E\mathcal M\times G$ admits a unique model structure in which a map $f$ is a weak equivalence or fibration if and only if for all universal $H\subset\mathcal M$ and all $\phi\colon H\to G$ the map $f^\phi$ is a weak homotopy equivalence or Kan fibration, respectively; here we write $(\blank)^\phi$ for the fixed points with respect to the \emph{graph subgroup} $\Gamma_{H,\phi}\coloneqq\{(h,\phi(h)):h\in H\}\subset\mathcal M\times G$.

We call this model structure the \emph{$G$-global model structure} and its weak equivalences the \emph{$G$-global weak equivalences}. It is simplicial, proper, combinatorial with generating cofibrations
\begin{equation*}
\{E\mathcal M\times_\phi G\times(\del\Delta^n\hookrightarrow\Delta^n) : \text{$H\subset\mathcal M$ universal, $\phi\colon H\to G$}\},
\end{equation*}
and filtered colimits and finite products in it are homotopical.
\begin{proof}
See \cite[Corollary~1.2.34 and Lemma~1.1.3]{g-global}.
\end{proof}
\end{thm}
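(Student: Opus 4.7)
The approach I would take is to recognise this as a cofibrantly generated model structure produced by the standard small-object-argument machinery (Kan's recognition / Smith's theorem for combinatorial model categories), with the proposed generating set $I$ as displayed and a parallel set $J$ obtained by replacing each $\partial\Delta^n\hookrightarrow\Delta^n$ with the horn inclusions $\Lambda^n_k\hookrightarrow\Delta^n$. The crucial adjunction is the free–forgetful one: for $\phi\colon H\to G$ with $H\subset\mathcal M$ universal, mapping out of $E\mathcal M\times_\phi G$ computes (up to a canonical isomorphism coming from the counit of the induction-restriction adjunction along $\Gamma_{H,\phi}\hookrightarrow\mathcal M\times G$) the graph-fixed points $(\blank)^\phi$. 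Consequently, a map has the right lifting property against $I$ (resp.\ $J$) iff every $(\blank)^\phi$ is an acyclic Kan fibration (resp.\ Kan fibration), and we get the desired right-lifting characterisation of fibrations from the Kan model structure on $\cat{SSet}$.

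With that in place, the plan is as follows. First, verify smallness: the domains and codomains of $I$ and $J$ are finitely presentable in $\cat{$E\mathcal M$-$G$-SSet}$ because $E\mathcal M\times_\phi G$ is a single orbit and $\partial\Delta^n,\Delta^n$ are finite. Second, identify the class of weak equivalences defined by the fixed-point criterion as saturated under the usual two-out-of-three, retract, and closure properties; since $(\blank)^\phi$ preserves filtered colimits (as $E\mathcal M\times_\phi G$ has a finite set of non-degenerate simplices in each dimension, and $\phi$ ranges over fixed universal $H$) and finite products, this is immediate from the corresponding statements for Kan weak equivalences. Third, check the two nontrivial hypotheses of the recognition theorem: (a) every relative $I$-cell that has the right lifting property against $I$ is a weak equivalence, which follows because acyclic Kan fibrations at each $\phi$ characterize weak equivalences; and (b) every relative $J$-cell is a weak equivalence and a cofibration — for the weak-equivalence part one applies $(\blank)^\phi$ pointwise, uses that it commutes with pushouts along $I$-cofibrations (by the orbit structure) and with the relevant transfinite compositions, and reduces to the fact that $\Lambda^n_k\hookrightarrow\Delta^n$ is a Kan-acyclic cofibration.

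Uniqueness of the model structure with these weak equivalences and fibrations is formal. Simpliciality follows from the fact that $E\mathcal M\times_\phi G\times(\partial\Delta^n\hookrightarrow\Delta^n)\mathbin{\ppo}(\partial\Delta^m\hookrightarrow\Delta^m)$ is a relative $I$-cell (Joyal–Tierney style pushout-product check), combined with the simpliciality of the Kan model structure on fixed points. Left properness reduces, after applying $(\blank)^\phi$, to left properness of $\cat{SSet}$, provided $(\blank)^\phi$ sends pushouts along cofibrations to pushouts — which holds because the $\Gamma_{H,\phi}$-action on cells $E\mathcal M\times_\psi G\times\Delta^n$ is free away from the fixed points. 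Right properness follows similarly from right properness of Kan and the fact that $(\blank)^\phi$ preserves pullbacks (it is a right adjoint). Finally, the homotopicality of filtered colimits and finite products again reduces to the corresponding properties in $\cat{SSet}$ after passing to $(\blank)^\phi$.

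The genuinely delicate step, and the one I would expect to consume most of the work, is verifying the acyclicity of relative $J$-cells (step (b) above). The issue is that a general pushout of $E\mathcal M\times_\phi G\times(\Lambda^n_k\hookrightarrow\Delta^n)$ is not immediately seen to be a $\phi'$-equivalence for $\phi'\neq\phi$: one must analyse how the graph-fixed points $(\blank)^{\phi'}$ interact with the orbit $E\mathcal M\times_\phi G$, in particular identifying $(E\mathcal M\times_\phi G)^{\phi'}$ up to homotopy as a (possibly empty) disjoint union of contractible components indexed by certain conjugacy classes of intertwiners between $\phi$ and $\phi'$. This mirrors the combinatorial input in the Elmendorf-type analysis underlying \cite[Corollary~1.2.30]{g-global}, and handling it carefully — including the genuineness enforced by the universality of $H$ — is where the real content of the theorem lies.
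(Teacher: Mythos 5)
Your overall strategy matches the one used to prove the cited \cite[Corollary~1.2.30]{g-global}: take the displayed set $I$ and the parallel set $J$ of horn inclusions, use the corepresentability $\Hom(E\mathcal M\times_\phi G\times K,X)\cong\Hom_{\cat{SSet}}(K,X^\phi)$ to translate right lifting properties into levelwise Kan conditions on the graph fixed points, and check the hypotheses of Kan's recognition theorem. You also correctly locate where the real content lies: one has to understand $(E\mathcal M\times_\phi G)^{\phi'}$ for arbitrary universal $H'\subset\mathcal M$ and $\phi'\colon H'\to G$, and show it decomposes as a (possibly empty) disjoint union of contractible pieces (each of the shape $E$ applied to some nonempty set of intertwiners), which is exactly the Elmendorf-style input driving acyclicity of relative $J$-cell complexes. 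Simpliciality via pushout-product, properness via reduction to $\cat{SSet}$ after applying $(\blank)^\phi$, and the closing statement about finite products and filtered colimits (paralleling \cite[Lemma~1.1.2]{g-global}) are all handled as you describe.

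One of your justifications is concretely wrong, though. You claim $(\blank)^\phi$ preserves filtered colimits because ``$E\mathcal M\times_\phi G$ has a finite set of non-degenerate simplices in each dimension.'' This is false: $\mathcal M=\Inj(\omega,\omega)$ is uncountable, so $(E\mathcal M)_n=\mathcal M^{1+n}$ has uncountably many non-degenerate $n$-simplices for every $n\ge1$, and hence so does $E\mathcal M\times_\phi G$. The correct reason is simply that $\Gamma_{H,\phi}$ is a \emph{finite} group, so $(\blank)^\phi$ is a finite limit and therefore commutes with filtered colimits. That, together with $\partial\Delta^n,\Delta^n$ being finite simplicial sets and the adjunction you stated, is also what gives finite presentability of the domains and codomains of $I$ and $J$: one computes $\Hom(E\mathcal M\times_\phi G\times K,\colim X_i)\cong\Hom_{\cat{SSet}}(K,(\colim X_i)^\phi)\cong\colim\Hom_{\cat{SSet}}(K,X_i^\phi)$. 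Your phrase ``a single orbit'' is similarly misleading for this purpose — being a single orbit is not what forces finite presentability; finiteness of the stabilizing group is.
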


\begin{rk}
    It is not hard to show that every abstract finite group admits an essentially unique embedding $H\hookrightarrow\mathcal M$ such that the image is a universal subgroup, see~\cite[Lemma~1.2.8]{g-global}. Thus, taking $G=1$ this means that a global space comes with genuine $H$-fixed points for every finite group $H$. More generally, a $G$-global space comes with fixed points for every homomorphism $\phi\colon H\to G$. While we will not need this below, we remark for motivational purposes that the $G$-global Elmendorf theorem allows us to equivalently describe $G$-global spaces in terms of these fixed-point spaces together with various `restriction' maps between them \cite[Theorem~1.1.12]{g-global}.
\end{rk}

\begin{rk}
We again specialize the above theorem to $G=1$. In \cite[Theorem~1.2.21]{schwede-book} Schwede introduced a \emph{global model structure} on the category of orthogonal spaces  which contains equivariant information for all compact Lie groups. After throwing away all information for infinite groups (i.e.~after Bousfield localizing at a certain explicit class of `$\mathcal Fin$-global weak equivalences'), this becomes Quillen equivalent to the above model category, see~\cite[Theorems~1.4.30 and 1.4.31, Corollary~1.5.29]{g-global}.
\end{rk}

While the existence of suitable model structures will be important for some arguments below, ultimately we are interested in statements about the $\infty$-categories they present. More precisely, whenever $\mathscr C$ is a category equipped with a wide subcategory $W$ of weak equivalences, we can form the \emph{Dwyer-Kan localization} of $\mathscr C$ at $W$, i.e.~the universal example of a functor $\gamma\colon\nerve\mathscr C\to\mathscr C^\infty_W$ into some $\infty$-category sending $W$ into the maximal Kan complex $\core(\mathscr C^\infty_W)$; one way to obtain this is to form the \emph{Hammock localization} of $(\mathscr C,W)$ and then to apply the right derived functor of the homotopy coherent nerve $\nerve_\Delta$, see e.g.~\cite[1.2]{dwyer-kan-revisited}. If $W$ is clear from the context, we will just denote any localization by $\mathscr C^\infty$.

\begin{convention}
In order to avoid cumbersome notation, we agree to pick all localizations to be the respective identities on objects (which is for example possible since for the Hammock localization and the usual fibrant replacement functor in simplicial categories the resulting functor is bijective on objects), and we won't notationally distinguish between a morphism in the $1$-category and its image under the localization.
\end{convention}

We close this subsection with a technical observation: as a simplicial model category, $\cat{$\bm{E\mathcal M}$-$\bm G$-SSet}$ comes with a geometric realization functor
\begin{equation}\label{eq:geom-real-EM}
\Fun(\Delta^\op,\cat{$\bm{E\mathcal M}$-$\bm G$-SSet})\to \cat{$\bm{E\mathcal M}$-$\bm G$-SSet}
\end{equation}
given as the coend $\int^{[n]\in\Delta^\op}\Delta^n\otimes (\blank)_n$. While this functor is always left Quillen for the projective or Reedy model structure on the source and hence preserves weak equivalences between appropriately cofibrant objects, in our case we even have:

\begin{lemma}\label{lemma:EM-geometric-realization-homotopical}
The geometric realization functor $(\ref{eq:geom-real-EM})$
preserves all $G$-global weak equivalences.
\begin{proof}
As $\cat{$\bm{E\mathcal M}$-$\bm G$-SSet}$ is an enriched functor category, geometric realizations can be computed in $\cat{SSet}$, so they are simply given by taking the diagonal. The claim therefore follows from \cite[Lemma~1.2.57]{g-global}.
\end{proof}
\end{lemma}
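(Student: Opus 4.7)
The plan is to identify the geometric realization of a simplicial object in $\cat{$\bm{E\mathcal M}$-$\bm G$-SSet}$ with the diagonal of an associated bisimplicial set, and then to invoke the classical fact that the diagonal preserves levelwise weak equivalences of bisimplicial sets.

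First, I would note that $\cat{$\bm{E\mathcal M}$-$\bm G$-SSet}$ is a $\cat{SSet}$-enriched functor category, namely the category of $\cat{SSet}$-functors from the one-object $\cat{SSet}$-category with endomorphism simplicial monoid $E\mathcal M \times G$. Consequently, all colimits and all tensors with simplicial sets are computed on the underlying simplicial set. In particular, the coend defining the geometric realization $|X_\bullet|$ of a simplicial object $X_\bullet$ agrees, on underlying simplicial sets, with the usual coend $\int^{[n]\in\Delta}X_n \times \Delta^n$ in $\cat{SSet}$, and this in turn is canonically isomorphic to the diagonal $\diag Z$ of the bisimplicial set $Z_{n,m}=(X_n)_m$, endowed with the diagonal $(E\mathcal M \times G)$-action.

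Second, I would recall that a map in $\cat{$\bm{E\mathcal M}$-$\bm G$-SSet}$ is a $G$-global weak equivalence precisely when, for every universal $H \subset \mathcal M$ and every homomorphism $\phi \colon H \to G$, its $\phi$-fixed points form a weak homotopy equivalence of simplicial sets. Since fixed points for the finite graph subgroup $\Gamma_{H,\phi}$ are computed degreewise on simplicial sets, they commute with the diagonal: for any bisimplicial $(E\mathcal M\times G)$-set $Z$ one has $(\diag Z)^\phi\cong\diag(Z^\phi)$. Therefore, if $f_\bullet\colon X_\bullet\to Y_\bullet$ is a levelwise $G$-global weak equivalence, then $f^\phi_\bullet \colon X^\phi_\bullet \to Y^\phi_\bullet$ is a map of bisimplicial sets which in each simplicial degree is a weak homotopy equivalence, and the statement reduces to the classical result that the diagonal sends such maps to weak equivalences.

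I anticipate no real obstacle; the only nontrivial input is the classical diagonal lemma for bisimplicial sets, which is well-documented in the literature and, as the proof below indicates, is also recorded in the author's reference \cite{g-global}.
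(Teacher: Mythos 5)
Your proof is correct and follows the same route as the paper: identify the geometric realization with the diagonal of the associated bisimplicial $(E\mathcal M\times G)$-set, and then show the diagonal is homotopical for $G$-global weak equivalences. The only difference is cosmetic — where the paper cites \cite[Lemma~1.2.52]{g-global} for the second step, you unwind it (fixed points for finite graph subgroups are computed degreewise and so commute with $\diag$, then apply the classical bisimplicial diagonal lemma), which is presumably the content of that cited lemma anyway.
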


\subsection{A reminder on global \texorpdfstring{$\bm\Gamma$}{Γ}-spaces} Recall that Segal's \emph{$\varGamma$-spaces} \cite{segal-gamma} provide a rigorous implementation of the idea of monoids which are commutative, associative, and unital up to a system of `coherent' homotopies. Similarly to the equivariant situation, the correct notion of `commutative monoids up to globally coherent homotopies' turns out to be more subtle than just commutative monoids in the $\infty$-category of global spaces, a phenomenon for which Schwede coined the term \emph{ultra-commutativity}. Morally, this subtlety of (global) equivariant commutativity comes from the fact that we can let a group act on an iterated product $\prod_{i=1}^nx$ by permuting the factors; in the homotopy coherent setting, the commutativity of the product is extra \emph{data}, and we have to keep track how this relates to the above permutation action. In our model this is accomplished by looking at the individual terms of a $\Gamma$-object in $\cat{$\bm{E\mathcal M}$-SSet}$ not just through the eyes of the global weak equivalences, but instead through the eyes of the $\Sigma_n$-global weak equivalences for varying $n\ge0$:

\begin{defi}
    We write $\Gamma$ for the category of finite pointed sets and let $n^+\in\Gamma$ denote the object $\{0,1,\dots,n\}$ with basepoint $0$.
    A \emph{global $\varGamma$-space} is a functor $\Gamma\to\cat{$\bm{E\mathcal M}$-SSet}$ sending $0^+$ to the terminal object.
\end{defi}

\begin{thm}\label{thm:Gamma-EM-level}
There is a unique model structure on the category $\cat{$\bm\Gamma$-$\bm{E\mathcal M}$-SSet}_*$ of global $\varGamma$-spaces in which a map $f\colon X\to Y$ is a weak equivalence or fibration if and only if $f(n^+)$ is a $\Sigma_n$-global weak equivalence for all $n\ge0$; here we equip $X(n^+)$ and $Y(n^+)$ with the $\Sigma_n$-action induced by the functoriality of $X$ in $\Gamma$.

We call this model structure the \emph{global level model structure}. It is simplicial and combinatorial, and filtered colimits in it are homotopical.
\begin{proof}
See \cite[Theorem~2.2.24]{g-global}.
\end{proof}
\end{thm}

Just like in the classical non-equivariant situation, we will mostly be interested in those global $\Gamma$-spaces that satsify an additional \emph{specialness condition}. Again, the correct notion of specialness is more refined than the na\"ive generalization of non-equivariant specialness and has to take the $\Sigma_n$-actions into account.

\begin{defi}
A global $\Gamma$-space is called \emph{special} if the usual Segal map $X(n^+)\to X(1^+)^{\times n}$ is a $\Sigma_n$-global weak equivalence for every $n\ge0$. Here $\Sigma_n$ acts on the left as before and on the right via permuting the factors. We write $\cat{$\bm\Gamma$-$\bm{E\mathcal M}$-SSet}_*^{\textup{special}}$ for the full subcategory spanned by the special global $\Gamma$-spaces.
\end{defi}

By \cite[Corollary~2.2.53 and and Theorem~2.2.55]{g-global}, the resulting $\infty$-category $(\cat{$\bm\Gamma$-$\bm{E\mathcal M}$-SSet}_*^{\textup{special}})^\infty$ is presentable.

\begin{rk}
Schwede originally introduced so-called \emph{ultra-commutative monoids} \cite[Chapter~2]{schwede-book} as his model for global coherent commutativity. While these have the advantage of containing meaningful information for all compact Lie groups, as long as one is only interested in global homotopy theory for finite groups (as we are in this paper), there is no difference to the homotopy theory of global special $\Gamma$-spaces, see \cite[Corollary~2.3.17]{g-global}.
\end{rk}

\subsection{Special global \texorpdfstring{$\bm\Gamma$}{Γ}-spaces as product-preserving presheaves} As a first step towards our $\infty$-categorical model of special global $\Gamma$-spaces, we will show that $(\cat{$\bm\Gamma$-$\bm{E\mathcal M}$-SSet}_*^\text{special})^\infty$ is equivalent to product-preserving presheaves on a certain full subcategory $\AglGamma\subset(\cat{$\bm\Gamma$-$\bm{E\mathcal M}$-SSet}_*^\text{special})^\infty$. The bulk of the work that goes into proving Theorem~\ref{introthm:ucom} will then be actually understanding this full subcategory, to which all of Section~\ref{sec:burnside} will be devoted.

We begin by describing the objects of the full subcategory $\AglGamma$. These will arise from certain permutative categories (i.e.~symmetric monoidal 1-categories in which the associativity and unitality isomorphisms are the respective identities) via a classical construction from the $K$-theory of symmetric monoidal categories:

\begin{constr}\label{constr:segal-may-shimada-shimakawa}
We refer the reader to \cite[Definition~2.1]{shimada-shimakawa} for the construction of the special $\Gamma$-category $\Gamma(\mathcal C)$ associated to a small symmetric monoidal category $\mathcal C$; all that we will need below is that this construction is functorial in the $1$-category $\cat{SymMonCat}^0$ of small symmetric monoidal categories and \textit{strictly unital} strong symmetric monoidal functors, and that its underlying category is naturally isomorphic to $\mathcal C$; for simplicity we will suppress this natural isomorphism below and pretend that $\ev_{1^+}\circ\Gamma$ is equal to the forgetful functor $\cat{SymMonCat}^0\to\cat{Cat}$.

Using this, we now simply define
\begin{equation*}
\Gamma_{\textup{gl}}\coloneqq\nerve\circ\Fun(E\mathcal M,\blank)\circ\Gamma\colon\cat{SymMonCat}^0\to\cat{$\bm\Gamma$-$\bm{E\mathcal M}$-SSet}_*,
\end{equation*}
where $E\mathcal M$ acts on itself from the right in the obvious way, inducing a left $E\mathcal M$-action on $\Fun(E\mathcal M,\blank)$.

Just like $\nerve\circ\Gamma$ lands in special $\Gamma$-spaces, $\Gamma_{\text{gl}}$ factors through the subcategory of special global $\Gamma$-spaces, see~\cite[Example~2.2.52]{g-global}.
\end{constr}

\begin{rk}\label{rk:Gamma-gl-extension}
The fact that $\Gamma$ (and hence $\Gamma_{\text{gl}}$) is only functorial in \emph{strictly unital} symmetric monoidal functors is just a minor annoyance, and classical strictification theory allows us to extend it formally to the whole $(2,1)$-category of (small) symmetric monoidal categories and strong symmetric monoidal functors as follows:

The inclusion  $\cat{SymMonCat}^0\hookrightarrow\cat{SymMonCat}$ into the $1$-category of symmetric monoidal categories and all symmetric monoidal functors is a homotopy equivalence, see e.g.~\cite[Proposition~6.7]{sym-mon-global}. Moreover, the inclusion of the $1$-category $\cat{SymMonCat}$ of small symmetric monoidal categories into the corresponding $(2,1)$-category $\cat{SymMonCat}_{(2,1)}$ is a simplicial localization by a standard argument, cf.~\cite[Proposition~A.1.10]{g-global}, and so the composite
\begin{equation*}
\nerve(\cat{SymMonCat}^0)\xrightarrow{\nerve(\Gamma_{\textup{gl}})}\nerve(\cat{$\bm\Gamma$-$\bm{E\mathcal M}$-SSet}_*^{\textup{special}})\xrightarrow{\textup{loc}}(\cat{$\bm\Gamma$-$\bm{E\mathcal M}$-SSet}_*^{\textup{special}})^\infty
\end{equation*}
factors through a functor $\nerve_\Delta(\cat{SymMonCat}_{(2,1)})\to(\cat{$\bm\Gamma$-$\bm{E\mathcal M}$-SSet}_*^{\textup{special}})^\infty$ in an essentially unique way; we pick one such factorization and denote it by $\Gamma_{\text{gl}}$ again.
\end{rk}

\begin{constr}[{cf.\ \cite[Construction~4.2.15]{g-global}}]\label{constr:mathfrak-FG}
    Let $G$ be a finite group. We write $\mathfrak F_G$ for the following permutative category: the objects of $\mathfrak F_G$ are given by the natural numbers $\textbf{0},\textbf{1},\dots$, while the hom-sets are given by $\hom(\bm{m},\bm{n})=\emptyset$ for $m\not=n$ and $\hom(\bm{n},\bm{n})=\Sigma_n\wr G$. The symmetric monoidal structure is given on objects by the addition on $\mathbb N$, and on morphisms by block sum and concatenation, i.e.~$(\sigma;g_1,\dots,g_m)\oplus(\tau;h_1,\dots,h_n)\coloneqq (\sigma\oplus\tau;g_1,\dots,g_m,h_1,\dots,h_n)$. The symmetry isomorphism $\bm{m+n}\to\bm{n+m}$ is given by $(\chi_{m,n};1,\dots,1)$ where $\chi_{m,n}\in\Sigma_{m+n}$ is the shuffle permutation moving the first $m$ entries to the end.
\end{constr}

In other words, $\mathfrak F_G$ is a specific choice of a skeleton of the groupoid core of the category of finite free $G$-sets.

\begin{defi}
    We write $\AglGamma\subset (\cat{$\bm\Gamma$-$\bm{E\mathcal M}$-SSet}_*^\text{special})^\infty$ for the full subcategory spanned by finite products of the objects of the form $\Gamma_\text{gl}(\mathfrak F_G)$ for all finite groups $G$.
\end{defi}

In order to identify $(\cat{$\bm\Gamma$-$\bm{E\mathcal M}$-SSet}_*^\text{special})^\infty$ with product-preserving presheaves on $\AglGamma$, it will suffice by the general theory of \cite[§5.5.8]{htt} to show that the objects $\Gamma_\text{gl}(\mathfrak F_G)$ form a set of \emph{compact projective generators}, i.e.~the hom functors $\hom(\Gamma_\text{gl}(\mathfrak F_G),-)$ are jointly conservative and preserve filtered colimits as well as $\Delta^\op$-indexed colimits. We therefore begin by making these corepresented functors explicit:

\begin{thm}\label{thm:fixed-points-corep}
Let $G\subset\mathcal M$ be universal. The functor
\begin{equation*}
(\cat{$\bm\Gamma$-$\bm{E\mathcal M}$-SSet}_*^{\textup{special}})^\infty\to\cat{SSet}^\infty\eqqcolon\mathscr S
\end{equation*}
induced by $X\mapsto X(1^+)^G$ is corepresented by $\Gamma_{\textup{gl}}(\mathfrak F_G)$ with universal class
\begin{equation*}
\tau\in\pi_0\Gamma_{\textup{gl}}(\mathfrak F_G)(1^+)=\pi_0(\Fun(E\mathcal M,\mathfrak F_G)^G)
\end{equation*}
given by the isomorphism class of some (hence any) $G$-fixed functor $T\colon E\mathcal M\to\mathfrak F_G$ such that $T(g_2,g_1)=(\id;g_2g_1^{-1})\colon\cat{1}\to\cat{1}$ for all $g_1,g_2\in G\subset\mathcal M$.
\begin{proof}
We first note that $\tau$ is well-defined: namely, the functor $EG\to\mathfrak F_G$ specified above is a $G$-fixed object of $\Fun(EG,\mathfrak F_G)$ by direct inspection, and as $EG\hookrightarrow E\mathcal M$ is a right $G$-equivariant equivalence of categories (i.e.~an equivalence in the $2$-category of right $G$-categories, right $G$-equivariant functors, and right $G$-equivariant natural transformations), it follows that this admits an essentially unique extension to an object $T\in\Fun(E\mathcal M,\mathfrak F_G)^G$.

As $\Gamma(1^+,\blank)\smashp E\mathcal M/G_+$ corepresents the functor $\cat{$\bm\Gamma$-$\bm{E\mathcal M}$-SSet}_*^\infty\to\mathscr S$ sending a global $\Gamma$-space $X$ to $X(1^+)^G$  (with universal class the component of $\id_{1^+}\smashp[1])$, it then suffices to observe that there is a special weak equivalence\footnote{Here we call a map $f$ in $\cat{$\bm\Gamma$-$\bm{E\mathcal M}$-SSet}_*^\infty$ a \emph{special weak equivalence} if the induced map $\maps(f,T)$ on mapping spaces is an equivalence for every special global $\Gamma$-space $T$.} $\Gamma(1^+,\blank)\smashp E\mathcal M/G_+\to\Gamma_{\text{gl}}(\mathfrak F_G)$ sending $\id_{1^+}\smashp[1]$ to $T$ by \cite[Theorem~4.2.22]{g-global}.
\end{proof}
\end{thm}

\begin{prop}\label{prop:fixed-points-cocontinuous}
Let $G\subset\mathcal M$ be universal. Then the functor
\begin{equation*}
(\blank)(1^+)^G\colon(\cat{$\bm\Gamma$-$\bm{E\mathcal M}$-SSet}_*^\textup{special})^\infty\to\mathscr S
\end{equation*}
preserves filtered colimits and $\Delta^\op$-shaped colimits.
\begin{proof}
We will first prove the corresponding statements for $(\cat{$\bm\Gamma$-$\bm{E\mathcal M}$-SSet}_*)^\infty$. For this, note that filtered colimits in the $1$-category of global $\Gamma$-spaces are homotopical by Theorem~\ref{thm:Gamma-EM-level}, and so are geometric realization by Lemma~\ref{lemma:EM-geometric-realization-homotopical} applied levelwise. It follows directly that filtered colimits in the $\infty$-category of global $\Gamma$-spaces can be computed in the underlying model category, while \cite[Corollary~A.2.9.30]{htt} shows that $\Delta^\op$-shaped colimits can be computed as geometric realizations. However, taking $G$-fixed points clearly preserves filtered colimits on the pointset level, and it also commutes with geometric realization as the latter is just given by taking diagonals.

It only remains to show that $(\cat{$\bm\Gamma$-$\bm{E\mathcal M}$-SSet}_*^\textup{special})^\infty$ is closed under the above colimits, for which it is then again enough to show the corresponding statements on the pointset level. This follows simply from Theorem~\ref{thm:g-global-model-structure-EM} and Lemma~\ref{lemma:EM-geometric-realization-homotopical} using that finite products in $\cat{SSet}$ commute with filtered colimits and geometric realization.
\end{proof}
\end{prop}

\begin{prop}\label{prop:gamma-mackey-geometric}
    The restricted Yoneda embedding induces an equivalence
    \begin{equation}\label{eq:restricted-Yoneda-CMON}
        (\cat{$\bm\Gamma$-$\bm{E\mathcal M}$-SSet}_*^\textup{special})^\infty\simeq\Fun^\times(\AglGamma^\op,\mathscr S).
    \end{equation}
\begin{proof}
    Recall that the Yoneda embedding $\AglGamma\hookrightarrow\Fun^\times(\AglGamma^\op,\mathscr S)$ exhibits the target as the sifted colimit completion of the source, so that there exists a unique sifted colimit-preserving functor $L\colon\Fun^\times(\AglGamma^\op,\mathscr S)\to (\cat{$\bm\Gamma$-$\bm{E\mathcal M}$-SSet}_*^\textup{special})^\infty$ extending the inclusion of $\AglGamma$; moreover, $L$ is left adjoint to $(\ref{eq:restricted-Yoneda-CMON})$.

    Combining Theorem~\ref{thm:fixed-points-corep} with Proposition~\ref{prop:fixed-points-cocontinuous}, $\Gamma_\text{gl}(\mathfrak F_G)$ is compact projective for every finite group $G$, hence so is any finite coproduct of these. As finite coproducts and products agree in $(\cat{$\bm\Gamma$-$\bm{E\mathcal M}$-SSet}_*^\textup{special})^\infty$ by \cite[Theorem~2.2.64]{g-global}, we conclude that $\AglGamma$ consists of compact projective objects. \cite[Proposition~5.5.8.22]{htt} therefore shows that $L$ is fully faithful, and so it only remains to prove that its right adjoint $(\ref{eq:restricted-Yoneda-CMON})$ is conservative.

    In light of Theorem~\ref{thm:fixed-points-corep} it will be enough for this that a map $f\colon X\to Y$ of global {special} $\Gamma$-spaces is a global level weak equivalence as soon as $f(1^+)$ is a global weak equivalence. As $f(n^+)$ for $n\ge2$ agrees up to $\Sigma_n$-global weak equivalence with $\prod_{i=1}^n f(1^+)$, it will suffice that $\prod_{i=1}^n(-)$ sends global weak equivalences to $\Sigma_n$-global weak equivalences. Identifying $\{1,\dots,n\}$ with $\Sigma_n/\Sigma_{n-1}$, this follows from \cite[Corollary~1.2.79]{g-global}, also cf.\ \cite[Corollary~1.4.71]{g-global}.
\end{proof}
\end{prop}

\section{The global effective Burnside category}\label{sec:burnside}
As $\AglGamma$ was defined as a full subcategory of $(\cat{$\bm\Gamma$-$\bm{E\mathcal M}$-SSet}_*^\textup{special})^\infty$, it is still a rather mysterious object. In this section, we will give a concrete and combinatorial description as the opposite of a certain \emph{global effective Burnside category} $A^\text{gl}$ (Definition~\ref{defi:burnside}). As part of this, we will in particular show that $\AglGamma$ is actually a $(2,1)$-category, i.e.~its mapping spaces are 1-truncated.

\subsection{The global effective Burnside category} In this subsection we will construct $A^\text{gl}$ using Barwick's general machinery of effective Burnside categories \cite{barwick-mackey}.

\begin{defi}
We write $\mathscr F$ for the $\infty$-category of finite groupoids (i.e.~the homotopy coherent or Duskin nerve of the $(2,1)$-category of finite groupoids) and $\mathscr F_\dagger\subset\mathscr F$ for the wide subcategory of faithful functors.
\end{defi}

The \emph{canonical model structure} on the category $\cat{Cat}$ of small categories (see e.g.~\cite{rezk-cat}) restricts to a model structure on $\cat{Grpd}$. Explicitly, the weak equivalences are the equivalences of groupoids, the cofibrations are the functors that are injective \emph{on objects}, and the fibrations are given by the \emph{isofibrations}, i.e.~those functors $F\colon\mathcal G\to\mathcal H$ such that there exists for every $G\in\mathcal G$ and every $h\colon F(G)\to H$ in $\mathcal H$ a morphism $g\colon G\to G'$ in $\mathcal G$ with $F(g)=h$. This model structure is combinatorial and it is moreover simplicial with respect to the obvious enrichment. We will frequently use below that we can use this structure to compute colimits and limits in $\mathscr F$:

\begin{lemma}\label{lemma:limits-vs-homotopy-limits}
\begin{enumerate}
\item $\mathscr F$ has all finite coproducts and these can be computed in $\cat{Grpd}$, i.e.~the localization functor preserves finite coproducts.
\item $\mathscr F$ has all pullbacks and they can be computed as homotopy pullbacks in $\cat{Grpd}$ in the following sense: every functor $\mathcal G\to\mathcal D$ in $\mathscr F$ can be represented by an equivalence followed by an isofibration $\mathcal C\to\mathcal D$ of \emph{finite} groupoids, and any $1$-categorical pullback
\begin{equation*}
\begin{tikzcd}
\mathcal A\arrow[d]\arrow[dr,phantom,"\lrcorner"{very near start}]\arrow[r]&\mathcal B\arrow[d]\\
\mathcal C\arrow[r]&\mathcal D
\end{tikzcd}
\end{equation*}
of finite groupoids along an isofibration $\mathcal C\to\mathcal D$ defines a pullback in $\mathscr F$.
\end{enumerate}
\begin{proof}
We will prove the second statement, the first one being similar but easier.

For this we first note that the $\infty$-category $\overline{\mathscr F}$ of \emph{all} groupoids (i.e.~the homotopy coherent nerve $\nerve_\Delta(\cat{Grpd})$ of the simplicial category $\cat{Grpd}$ of groupoids) is complete and cocomplete and that (co)limits in it can be computed as homotopy (co)limits in $\cat{Grpd}$, see \cite[proof of Corollary~4.2.4.8]{htt}. As $\mathscr F$ is a full subcategory of $\overline{\mathscr F}$ and since ordinary pullbacks of finite groupoids are again finite groupoids, it therefore only remains to construct the above factorizations.

For this we observe that the model structure on $\cat{Grpd}$ restricts to the structure of a category of fibrant objects in the sense of \cite[I.1]{brown-factorization}, and by Brown's Factorization Lemma (see p.~421 of \emph{op.~cit.}) it then suffices to show that this restricts to also make the category of \emph{finite} groupoids into a category of fibrant objects. However, using again the closure under pullbacks the only non-trivial statement is the existence of path objects, for which we can simply take the standard construction
\begin{equation*}
\mathcal G\xrightarrow{\text{const}}\mathcal G^{[1]}\xrightarrow{(\ev_0,\ev_1)}\mathcal G\times\mathcal G
\end{equation*}
for every finite groupoid $\mathcal G$.
\end{proof}
\end{lemma}

\begin{prop}\label{prop:disjunctive}
The triple $(\mathscr F,\mathscr F_\dagger,\mathscr F)$ is disjunctive in the sense of \cite[Definition~5.2]{barwick-mackey}.
\begin{proof}
We follow the terminology of \emph{loc.~cit.}

The previous lemma shows that $\mathscr F$ has all pullbacks and that they can be computed in terms of ordinary pullbacks. As pullbacks of faithful functors are again faithful by direct inspection, this shows that the above triple is adequate.

Another application of the previous lemma shows that $\mathscr F$ admits all finite coproducts and that they can be computed in $\cat{Grpd}$ again. As a functor out of a coproduct is faithful if and only if it is so on each coproduct summand, and since moreover inclusions of coproduct summands are faithful, this shows that faithful functors are compatible with coproducts.

Finally, let $f\colon I\to K,g\colon J\to K$ be maps of finite sets, and assume we are given for each $(i,j)\in I\times_K J$ a pullback
\begin{equation}\label{diag:pullback-summands}
\begin{tikzcd}
\mathcal A_{i,j}\arrow[r]\arrow[dr,phantom,"\lrcorner"{very near start}]\arrow[d] & \mathcal B_j\arrow[d]\\
\mathcal C_i\arrow[r]&\mathcal D_k
\end{tikzcd}
\end{equation}
in $\mathscr F$ where the horizontal maps are faithful, and where we write $k\coloneqq f(i)=g(j)$. We have to show that the induced square
\begin{equation}\label{diag:pullback-summed}
\begin{tikzcd}
\coprod_{(i,j)\in I\times_KJ}\mathcal A_{i,j}\arrow[r]\arrow[d] & \coprod_{j\in J}\mathcal B_j\arrow[d]\\
\coprod_{i\in I}\mathcal C_i\arrow[r]&\coprod_{k\in K}\mathcal D_k
\end{tikzcd}
\end{equation}
is again a pullback in $\mathscr F$. For this we may assume without loss of generality that each of the diagrams $(\ref{diag:pullback-summands})$ comes from a $1$-categorical pullback of groupoids along an isofibration $\mathcal B_j\to\mathcal D_k$. By direct inspection, the above map $\coprod_{j\in J}\mathcal B_j\to\coprod_{k\in K}\mathcal D_k$ is again an isofibration, so it is enough to show that $(\ref{diag:pullback-summed})$ is a $1$-categorical pullback again. This is immediate by inspecting the standard construction of pullbacks in the $1$-category of groupoids.
\end{proof}
\end{prop}

Specializing \cite[Definition~5.7]{barwick-mackey} we can now introduce our main object of study:

\begin{defi}\label{defi:burnside}
We define $\Agl\coloneqq A^{\textup{eff}}(\mathscr F,\mathscr F_\dagger,\mathscr F)$ and call it the \emph{global effective Burnside category}.
\end{defi}

Explicitly this means that an $n$-simplex of $\Agl$ is a diagram
\[
    \left.\begin{tikzcd}[cramped,column sep=1.1em, row sep=.75em]
        &[-1em]&&& \cdot\arrow[dl]\arrow[dr]\\
        &&&\cdot\arrow[dl]\arrow[dr] && \cdot\arrow[dl]\arrow[dr]\\
        &&\cdot\arrow[dl]\arrow[dr] &&\cdot\arrow[dl]\arrow[dr] &&\cdot\arrow[dl]\arrow[dr]\\
        & \phantom{\cdot} && \phantom{\cdot} && \phantom{\cdot} && \phantom{\cdot}\\[-5ex]
        {\iddots}\!\!\!\! &&&&&&&&[-1.75em]\ddots\\
    \end{tikzcd}\;\right\}\text{$n$ rows of arrows}
\]
in $\mathscr F$, such that all squares are pullbacks and all right-pointing maps are faithful. In particular, a morphism in $A^\text{gl}$ is given by a \emph{span} $\mathcal G\gets\mathcal H\to\mathcal K$ such that the right-pointing map is faithful. We recall from \cite[Notation~5.9]{barwick-mackey} that $A^\text{gl}$ then contains a copy of $\mathscr F^\op$ as the subcategory spanned by those morphisms where the right-pointing morphisms are equivalences, and a copy of $\mathscr F_\dagger$ as the spans whose left-pointing maps are equivalences.

\subsection{Bisets vs.~correspondences of groupoids}
As the first step of our comparison between $(A^\text{gl})^\text{op}$ and $\AglGamma$, we will exhibit the former as an explicit full subcategory $\mathcal A_\text{gl}$ of the $(2,1)$-category of symmetric monoidal categories. Heuristically, the functor $\Phi\colon(A^\text{gl})^\text{op}\to\mathcal A_\text{gl}$ will send a finite group $G$ (viewed as a 1-object groupoid) to the groupoid core of the category of finite free $G$-sets with symmetric monoidal structure coming from disjoint unions; the functoriality in $\mathscr F$ will be given by induction and functoriality in $\mathscr F_\dagger^\op$ via restriction. In this subsection we will construct the restriction of $\Phi$ to $\mathscr F$. Note that while the na\"ive approach would result in a \emph{pseudo\-functor} $\mathscr F\to\cat{SymMonCat}_{(2,1)}$, for technical reasons down the line it will be convenient to have a strict 2-functor instead; this way, we will also avoid having to spell out various 2-categorical coherences by hand. To achieve this, we will replace the category of finite free $G$-sets by a suitable subcategory of the slice (over-category) $\cat{Grpd}\downarrow{BG}$, with the translation given by the 1-categorical Grothendieck construction. We begin by introducing these slice categories and recalling the necessary facts about the Grothendieck construction; in fact, we will do all of this in slightly greater generality, which will later allow us to understand mapping spaces in $A^\text{gl}$ very explicitly as certain groupoids of bisets. Similar comparisons have been given by Miller \cite{miller-burnside} or Dell'Ambrogio and Huglo \cite{della-huglo}, although we unfortunately cannot use their results directly.

\begin{defi}
Let $\mathcal G,\mathcal H$ be groupoids. We call a functor $X\colon\mathcal G\times\mathcal H\to\cat{Set}$ a \emph{$\mathcal G$-$\mathcal H$-biset}. We say that $X$ is \emph{$\mathcal G$-free} if $X(G,H)$ is a free $\Aut_{\mathcal G}(G)$-set for every $G\in\mathcal G,H\in\mathcal H$; equivalently: if $g,g'\colon G\to G'$, $h\colon H\to H'$ are morphisms in $\mathcal G$ and $\mathcal H$, respectively, and $x\in X(G,H)$ with $X(g,h)(x)=X(g',h)(x)$, then $g=g'$.
\end{defi}

\begin{constr}
Let $\mathcal G,\mathcal H$ be groupoids. We write $\overline{\F}_{\mathcal G,\mathcal H}$ for the full $2$-subcategory of the $2$-categorical slice $\cat{Grpd}\downarrow\mathcal G\times\mathcal H$ spanned by those functors $\pi\colon\mathcal X\to\mathcal G\times\mathcal H$ for which the composition $\pr_{\mathcal H}\circ\pi\colon\mathcal X\to\mathcal H$ is faithful.
\end{constr}

\begin{constr}
Let $\mathcal G$ be a groupoid. We recall that the classical \emph{Grothendieck construction} (see e.g.~\cite[§2.1.1]{htt}) provides an equivalence from the $2$-category $\Fun(\mathcal G,\cat{Grpd})$ of (say, strict) functors $\mathcal G\to\cat{Grpd}$, pseudonatural transformations, and modifications to the $2$-categorical slice $\cat{Grpd}\downarrow\mathcal G$. On objects, this is given by sending $F\colon\mathcal G\to\cat{Grpd}$ to the map $\pi\colon\int F\to\mathcal G$, where $\int F$ is the groupoid with objects the pairs $(G\in\mathcal G,X\in F(G))$ and morphisms $(G,X)\to (G',X')$ the pairs $(g\colon G\to G',x\colon F(g)(X)\to X')$; composition is defined in the evident way, and the map $\pi$ is given by projection to the first factor. We will further need the definition of $\int$ on \emph{strictly natural} transformations below: if $\sigma\colon F\Rightarrow G$ is natural, then $\int\sigma\colon\int F\to\int G$ is given on objects by $\int\sigma(G,X)=(G,\sigma_G(X))$ and on morphisms by $\int\sigma(g,X)=\big(g, \sigma_{G'}(x)\colon F(g)(\sigma_G(X))=\sigma_{G'}(F(g)(X))\to\sigma_{G'}(X')\big)$.
\end{constr}

\begin{lemma}\label{lemma:grothendieck-restricted-infinite}
Assume $\mathcal G$ and $\mathcal H$ are groupoids. Then the Grothendieck construction restricts to an equivalence $\Fun^{\textup{$\mathcal G$-free}}(\mathcal G\times\mathcal H,\cat{Set})\to\overline{\F}_{\mathcal G,\mathcal H}$.
\begin{proof}
We first observe that the above is well-defined: if $F\colon\mathcal G\times\mathcal H\to\cat{Set}$ is $\mathcal G$-free, and $(g,h),(g',h')$ are morphisms $(G,H;X)\to (G',H';X')$ in $\int F$ with $h=h'$, then $F(g,h)(X)=X'=F(g',h')(X)=F(g',h)(X)$, so $g=g'$ by freeness; thus $\pr_{\mathcal H}\circ\pi$ is faithful.

On the other hand, if $X$ is a functor such that $\pi\colon\int X\to\mathcal G\times\mathcal H$ is faithful then one easily checks that each $X(G,H)$ is equivalent to a discrete groupoid. If now $\rho\colon\mathcal X\to\mathcal G\times\mathcal H$ is any functor such that $\pr_{\mathcal H}\circ \rho$ is faithful, then there exists a functor $X\colon\mathcal G\times\mathcal H\to\cat{Grpd}$ such that $\rho$ is equivalent to $\pi\colon\int F\to\mathcal G\times\mathcal H$. As also $\pr_{\mathcal H}\circ \pi\colon\int F\to\mathcal H$ is faithful, we can (by the above observation) assume without loss of generality that $X$ factors through $\cat{Set}$. It only remains to show that $X$ is $\mathcal G$-free, which however just follows from running the above argument backwards.
\end{proof}
\end{lemma}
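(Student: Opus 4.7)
The plan is to appeal to the classical Grothendieck equivalence
\[
\int\colon \Fun(\mathcal G\times\mathcal H,\cat{Grpd})\xrightarrow{\ \simeq\ } \cat{Grpd}\downarrow\mathcal G\times\mathcal H
\]
recalled just above, and to verify that it restricts to an equivalence between the indicated full $2$-subcategories. Since both $\Fun^{\textup{$\mathcal G$-free}}(\mathcal G\times\mathcal H,\cat{Set})$ and $\overline{\F}_{\mathcal G,\mathcal H}$ are full $2$-subcategories of the source and target, full faithfulness on pseudonatural transformations and modifications will be automatic, so the real content lies in checking (i) that $\int$ sends $\mathcal G$-free Set-valued functors into $\overline{\F}_{\mathcal G,\mathcal H}$, and (ii) that every object of $\overline{\F}_{\mathcal G,\mathcal H}$ arises in this way up to equivalence.

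For (i), I would argue directly from the explicit description of $\int F$: a morphism $(G,H;X)\to (G',H';X')$ is a pair $(g,h)$ in $\mathcal G\times\mathcal H$ with $F(g,h)(X)=X'$, so two such morphisms sharing the $\mathcal H$-component $h$ satisfy $F(g,h)(X)=F(g',h)(X)$, whence $g=g'$ by $\mathcal G$-freeness. For (ii), given $\rho\colon\mathcal X\to\mathcal G\times\mathcal H$ with $\pr_{\mathcal H}\circ\rho$ faithful, I would first use the full Grothendieck equivalence to produce $X\colon\mathcal G\times\mathcal H\to\cat{Grpd}$ with $\int X\simeq\mathcal X$ over $\mathcal G\times\mathcal H$. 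Transferring the hypothesis, $\pr_{\mathcal H}\circ\pi\colon\int X\to\mathcal H$ remains faithful, which in particular forces $\pi$ itself to be faithful; but then an automorphism of $a\in X(G,H)$ corresponds precisely to a morphism $(G,H;a)\to(G,H;a)$ in $\int X$ covering the identity of $(G,H)$, so every fiber $X(G,H)$ has only trivial automorphism groups and is therefore equivalent to a set. I would then strictify by replacing $X$ with $\widetilde X\mathrel{:=}\pi_0\circ X\colon\mathcal G\times\mathcal H\to\cat{Set}$, connected to $X$ by a strict natural transformation that is a levelwise equivalence, hence invertible in $\Fun(\mathcal G\times\mathcal H,\cat{Grpd})$.

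Finally, $\mathcal G$-freeness of $\widetilde X$ follows by running the well-definedness argument in reverse: faithfulness of $\pr_{\mathcal H}\circ\pi$ persists under the equivalence $\int X\simeq\int\widetilde X$, and given $g,g'$ with $\widetilde X(g,h)(X)=\widetilde X(g',h)(X)$ this faithfulness applied to the morphisms $(g,h),(g',h)$ of $\int\widetilde X$ forces $g=g'$. The one place that requires mild care is the strictification $X\rightsquigarrow\widetilde X$, which I expect to be the main obstacle; however, it is smoother than it looks because the collapse $X(G,H)\to\pi_0 X(G,H)$ is automatically strictly natural in $(G,H)$ with no coherence cells needed, and is levelwise an equivalence precisely by the triviality of automorphism groups established above. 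Everything else is formal Grothendieck bookkeeping.
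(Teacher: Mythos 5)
Your proof is correct and follows the same route as the paper: first check well-definedness directly from the freeness hypothesis, then, given $\rho$ with $\pr_{\mathcal H}\circ\rho$ faithful, pass to a Grothendieck-straightened $X\colon\mathcal G\times\mathcal H\to\cat{Grpd}$, observe that faithfulness forces the fibers $X(G,H)$ to have trivial automorphism groups, replace $X$ by a $\cat{Set}$-valued functor, and recover $\mathcal G$-freeness by running the first argument backwards. The only difference is that you spell out two steps the paper leaves as ``one easily checks,'' namely why faithfulness kills the fiberwise automorphism groups and how the strictification via $\pi_0\circ X$ with the collapse maps gives the desired $\cat{Set}$-valued replacement; both of these are filled in correctly.
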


In particular, we see that the $(2,1)$-category $\overline{\F}_{\mathcal G,\mathcal H}$ is actually a $1$-category, i.e.~the quotient map to its homotopy category $\h\overline{\F}_{\mathcal G,\mathcal H}$ is an equivalence.

\begin{rk}
If $X$ is any $\mathcal G$-$\mathcal H$-biset, then $\pi\colon\int X\to\mathcal G\times\mathcal H$ is an isofibration, and if $f\colon X\to Y$ is a natural transformation, then $\int f$ actually strictly commutes with the projections. Thus, the Grothendieck construction factors through the $2$-subcategory $\overline{\F}_{\mathcal G,\mathcal H}^{\textup{iso}}$ whose objects are the isofibrations and whose morphisms are given by \emph{strictly} commuting diagrams; in particular, $\overline{\F}_{\mathcal G,\mathcal H}^{\text{iso}}\hookrightarrow\overline{\F}_{\mathcal G,\mathcal H}$ is an equivalence.
\end{rk}

\begin{rk}\label{rk:straightening-infinite}
Write $\overline{\mathscr D}$ for the full $2$-subcategory of $\cat{Grpd}$ spanned by the essentially discrete groupoids. The usual  \emph{straightening construction} is a quasi-inverse of the Grothen\-dieck construction that turns an object $\pi\colon\mathcal X\to\mathcal G\times\mathcal H$ of $\overline{\F}_{\mathcal G,\mathcal H}$ into a pseudofunctor $\mathcal G\to\overline{\mathscr D}$ and similarly for morphisms. As the $2$-category $\overline{\mathscr D}$ is equivalent to the $1$-category $\cat{Set}$ via the functor $\pi_0$ taking connected components, this then yields a quasi-inverse $\tni\colon\h\overline{\F}_{\mathcal G,\mathcal H}^{\text{iso}}\to\Fun^{\text{$\mathcal G$-free}}(\mathcal G\times\mathcal H,\cat{Set})$ to $\int$ that we can describe explicitly as follows: an isofibration $\rho\colon\mathcal X\to\mathcal G\times\mathcal H$ is sent to the functor that sends $(G,H)\in\mathcal G\times\mathcal H$ to $\pi_0(\rho^{-1}(G,H))$ and a morphism $(g\colon G\to G',h\colon H\to H')$ in $\mathcal G\times\mathcal H$ to the unique map $(\tni\rho)(g,h)\colon\pi_0(\rho^{-1}(G,H))\to \pi_0(\rho^{-1}(G',H'))$ such that there exists for every $X\in\pi_0(\rho^{-1}(G,H))$ and some (hence any) choice of representatives $x$ of $X$ and $y$ of $(\tni \rho)(g)(X)$ a morphism $\chi\colon x\to y$ in $\mathcal X$ with $\rho(\chi)=(g,h)$. Moreover, if $\zeta\colon\mathcal Y\to\mathcal G\times\mathcal H$ is another object and $\alpha\colon\mathcal X\to\mathcal Y$ defines a morphism between them, then $\tni\alpha$ is given by sending $[x]\in\tni\mathcal X(G,H)=\pi_0(\rho^{-1}(G,H))$ to $[\alpha(x)]$.
\end{rk}

We will now restrict our attention to \emph{finite} groupoids $\mathcal G,\mathcal H$. In this case, let us write $\F_{\mathcal G,\mathcal H}\subset\overline{\F}_{\mathcal G,\mathcal H}$ for the full $2$-subcategory spanned by those $\mathcal X\to\mathcal G\times\mathcal H$ for which $\mathcal X$ is \emph{finite}, and similarly define $\F_{\mathcal G,\mathcal H}^{\textup{iso}}\subset\overline{\F}^{\textup{iso}}_{\mathcal G,\mathcal H}$. As any map between finite groupoids factors as a composition of an equivalence and an isofibration \emph{of finite groupoids} (see Lemma~\ref{lemma:limits-vs-homotopy-limits}), the inclusion $\F_{\mathcal G,\mathcal H}^{\textup{iso}}\hookrightarrow{\F}_{\mathcal G,\mathcal H}$ is again an equivalence.

\begin{lemma}\label{lemma:grothendieck-restricted}
The Grothendieck construction $\int$ and the straightening construction $\tni$ define mutually inverse equivalences $\Fun^{\textup{$\mathcal G$-free}}(\mathcal G\times\mathcal H,\cat{FinSet})\rightleftarrows\F_{\mathcal G,\mathcal H}^{\textup{iso}}$.
\begin{proof}
By Lemma~\ref{lemma:grothendieck-restricted-infinite} and Remark~\ref{rk:straightening-infinite} it suffices to observe that both functors restrict accordingly.
\end{proof}
\end{lemma}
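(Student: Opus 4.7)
The plan is to leverage Lemma~\ref{lemma:grothendieck-restricted-infinite} and Remark~\ref{rk:straightening-infinite}, which already give mutually inverse equivalences $\int\colon\Fun^{\textup{$\mathcal G$-free}}(\mathcal G\times\mathcal H,\cat{Set})\rightleftarrows\overline{\F}_{\mathcal G,\mathcal H}^{\textup{iso}}\colon\tni$ in the unrestricted setting. Since the claimed equivalence in the finite case lives inside these as full subcategories on both sides, it will suffice to check that $\int$ and $\tni$ restrict to the indicated subcategories; fullness then ensures they remain quasi-inverse on the restrictions.

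First I would verify that $\int$ sends a $\mathcal G$-free biset $X\colon\mathcal G\times\mathcal H\to\cat{FinSet}$ to an object of $\F_{\mathcal G,\mathcal H}^{\textup{iso}}$. Since $\mathcal G$ and $\mathcal H$ are finite and every value $X(G,H)$ is finite, the objects of $\int X$ form a finite set; likewise each hom set in $\int X$ is a subset of a finite set, so $\int X$ is a finite groupoid. That $\pi\colon\int X\to\mathcal G\times\mathcal H$ is an isofibration and that the composition with $\pr_{\mathcal H}$ is faithful has already been noted in the preceding remark and in the proof of Lemma~\ref{lemma:grothendieck-restricted-infinite}, so these properties are inherited without further work.

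Conversely, I would check that $\tni$ sends an object $\rho\colon\mathcal X\to\mathcal G\times\mathcal H$ of $\F_{\mathcal G,\mathcal H}^{\textup{iso}}$ to a functor valued in $\cat{FinSet}$. By the explicit description in Remark~\ref{rk:straightening-infinite}, the value $(\tni\rho)(G,H)=\pi_0(\rho^{-1}(G,H))$ is a quotient of a subset of $\Ob(\mathcal X)$, and since $\mathcal X$ is finite this is automatically a finite set. The $\mathcal G$-freeness of the resulting biset is already part of Lemma~\ref{lemma:grothendieck-restricted-infinite}, and there is nothing further to check on morphisms.

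Neither step presents a real obstacle; the only thing one has to be mindful of is that the ambient equivalence has already been arranged on the ``iso'' variants of the slice categories, so one genuinely only needs a size bookkeeping argument rather than reproving compatibility of $\int$ and $\tni$ with composition, $2$-cells, or the essential surjectivity on the relevant subcategories.
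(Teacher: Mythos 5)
Your proposal is correct and follows exactly the same route as the paper's (very terse) proof: appeal to Lemma~\ref{lemma:grothendieck-restricted-infinite} and Remark~\ref{rk:straightening-infinite} for the ambient equivalence, and then check that $\int$ and $\tni$ preserve the finiteness conditions on both sides. You have simply spelled out the ``restrict accordingly'' bookkeeping that the paper leaves implicit.
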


\begin{constr}
We write ${\F}_{\mathcal G}\coloneqq{\F}_{\mathcal G,*}$; for simplicity, we agree to take the product $\mathcal G\times *$ to be actually \emph{equal} to $\mathcal G$. With respect to this choice $\F_{\mathcal G}$ is literally \emph{equal} to the $2$-categorical slice ${\mathscr D}\downarrow\mathcal G$, where ${\mathscr D}\subset\mathscr F$ is the full subcategory spanned by the finite essentially discrete groupoids.
\end{constr}

\begin{constr}\label{constr:f-lower-shriek}
We now define a strict $2$-functor $\psi\colon\mathscr F\to\cat{Cat}$ via $\psi(\mathcal G)=\h\F_{\mathcal G}$, with $2$-functoriality given by the usual functoriality of the $2$-categorical slice. For $f\colon\mathcal G\to\mathcal H$ we abbreviate $f_!\coloneqq\psi(f)$.
\end{constr}

\begin{constr}\label{constr:psi-right-adjoints}
Let $f\colon\mathcal G\to\mathcal H$ be a faithful isofibration of finite groupoids. We construct a functor $f^*\colon\h\F_{\mathcal H}\to\h\F_{\mathcal G}$ as follows: an object $\pi\colon\mathcal X\to\mathcal H$ is sent to the left vertical map in the pullback square
\begin{equation}\label{diag:pullback-defining-epsilon}
\begin{tikzcd}
f^*\mathcal X\arrow[d, "f^*\pi"']\arrow[r, "\epsilon"] & \mathcal X\arrow[d,"\pi"]\\
\mathcal G\arrow[r, "f"'] & \mathcal H
\end{tikzcd}
\end{equation}
(note that $\epsilon$ is faithful as a pullback of a faithful functor, so $f^*\mathcal X$ is indeed essentially discrete again). Moreover, if
\begin{equation}\label{diag:morphism-alpha}
\begin{tikzcd}[column sep=small]
\mathcal X\arrow[rr,"\alpha"]\arrow[rd,"\pi"'{name=pi}, bend right=10pt] && \mathcal Y\arrow[ld, "\rho", bend left=10pt]\\
{}\twocell[urr,from=pi, "\scriptstyle\hat\alpha"{yshift=-5pt}] &\mathcal H
\end{tikzcd}
\end{equation}
represents a morphism in $\h\F_{\mathcal H}$, then $f^*[\alpha,\hat\alpha]$ is constructed as follows: we pick for each $(X,G)\in f^*\mathcal X$ an isomorphism $\hat\beta_{X,G}\colon G\to \hat G$ such that $f(\hat\beta_{X,G})=\hat\alpha_X\colon\pi(X)\to\rho\alpha(X)$; note that this is indeed well-defined as $\pi(X)=f(G)$ and since $f$ was assumed to be an isofibration. We now set $\beta(X,G)\coloneqq(\alpha(X),\hat G)$, which is an element of $f^*\mathcal Y$ by definition of $\hat G$. There is then a unique way to extend this to a functor such that the maps $(\id,\hat\beta_{X,G})\colon(\alpha(X),G)\to(\alpha(X),\hat G)$ define a natural isomorphism filling
\begin{equation*}
\begin{tikzcd}
f^*\mathcal X\arrow[r,hook]\arrow[d, "f^*\alpha"'] & \mathcal X\times\mathcal G\arrow[d,"\alpha\times\mathcal G"]\\
f^*\mathcal Y\arrow[r,hook]&\mathcal Y\times\mathcal G,
\end{tikzcd}
\end{equation*}
and we set $f^*[\alpha,\hat\alpha]\coloneqq[\beta,\hat\beta]$; we omit the easy verification that this is independent of choices and makes $f^*$ into a functor. Moreover, one easily checks that the maps $\epsilon$ from $(\ref{diag:pullback-defining-epsilon})$ assemble into a natural transformation $f_!f^*\Rightarrow\id$, and that together with the maps $\eta\colon \mathcal X\to f^*f_!\mathcal X$ induced via the universal property of the pullback from the identity of $\mathcal X$ and the original structure map $\mathcal X\to\mathcal G$, this exhibits $f^*$ as a right adjoint of $f_!$.
\end{constr}

\begin{rk}
If the above diagram $(\ref{diag:morphism-alpha})$ commutes strictly (i.e.~$\hat\alpha$ is the identity), we can pick $\hat\beta$ also to be the identity, so $f^*\alpha$ is just represented by the usual pullback of $\alpha$ along $f$, i.e.~the restriction of $\alpha\times\mathcal G$.
\end{rk}

\begin{lemma}
The functor $\psi$ factors through the $2$-subcategory $\cat{Cat}^\amalg$ of categories with finite coproducts, finite coproduct preserving functors, and all natural transformations. If $f\colon\mathcal G\to\mathcal G'$ is faithful, then $f_!$ has a right adjoint $f^*$, and this adjoint again preserves finite coproducts.
\begin{proof}
It is clear that $\h\F_{\mathcal G}\simeq\cat{FinSet}\downarrow\mathcal G$ has finite coproducts for every $\mathcal G\in\mathscr F$, and that these are created by the forgetful functor, so that $f_!$ preserves finite coproducts for every $f\colon\mathcal G\to\mathcal H$. On the other hand, if $f$ is any faithful functor, then we factor it as an equivalence $i$ followed by a faithful isofibration $p$ (between finite groupoids); then $p_!$ has a right ajdoint by the previous construction, and any quasi-inverse to $i_!$ provides a right adjoint to it. Thus, it only remains to show that the functor $p^*$ from the above construction preserves finite coproducts. For this, we may restrict to the functor $p^*\colon\h\F_{\mathcal H}^{\text{iso}}\to\h\F_{\mathcal G}^{\text{iso}}$. As on these $p^*$ is just given by the ordinary pullback while finite coproducts can be computed in the $1$-category of (essentially discrete) groupoids, this is just the statement that pullbacks in groupoids preserve coproducts, also cf.~the proof of Proposition~\ref{prop:disjunctive} above.
\end{proof}
\end{lemma}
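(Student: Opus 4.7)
My plan is to argue directly from the Grothendieck-style description of $\h\F_{\mathcal G}$ together with the factorization lemma and Construction~\ref{constr:psi-right-adjoints}.

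First, I would verify that $\h\F_{\mathcal G}$ admits finite coproducts and these are computed as disjoint unions of the source groupoids. Indeed, via Lemma~\ref{lemma:grothendieck-restricted} (and the fact that $\mathcal G\times *$ is taken to equal $\mathcal G$), $\h\F_{\mathcal G}\simeq\Fun^{\textup{$\mathcal G$-free}}(\mathcal G,\cat{FinSet})$, in which coproducts exist and are computed pointwise. Concretely, the coproduct of $\pi_1\colon\mathcal X_1\to\mathcal G$ and $\pi_2\colon\mathcal X_2\to\mathcal G$ is represented by the evident map $\mathcal X_1\amalg\mathcal X_2\to\mathcal G$, which is visibly still faithful onto an essentially discrete groupoid. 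Since $f_!$ is postcomposition with $f$ and coproducts are created by the forgetful functor to $\mathscr F$, it follows immediately that $f_!$ preserves finite coproducts for every $f$.

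Next, for the existence of the right adjoint when $f\colon\mathcal G\to\mathcal G'$ is faithful, I would use Lemma~\ref{lemma:limits-vs-homotopy-limits} to factor $f$ as an equivalence $i\colon\mathcal G\xrightarrow{\sim}\widetilde{\mathcal G}$ followed by an isofibration $p\colon\widetilde{\mathcal G}\to\mathcal G'$ between finite groupoids; since $f$ is faithful and $i$ is an equivalence, $p$ is also faithful. Now $i_!$ is an equivalence of categories (any quasi-inverse giving both adjoints), while $p_!$ has the explicit right adjoint $p^*$ from Construction~\ref{constr:psi-right-adjoints}, obtained by strict pullback along $p$ on the subcategory of isofibrations. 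Composing gives the desired right adjoint $f^*$ to $f_!$.

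Finally, to check that $f^*$ preserves finite coproducts, I reduce to showing that $p^*$ does, since a quasi-inverse to $i_!$ trivially preserves all colimits that exist. Because the inclusion $\h\F_{\mathcal G'}^{\textup{iso}}\hookrightarrow\h\F_{\mathcal G'}$ is an equivalence and likewise over $\widetilde{\mathcal G}$, and coproducts can be computed on the isofibration side (a disjoint union of isofibrations is again an isofibration), it suffices to show that the strict-pullback description of $p^*$ on $\h\F^{\textup{iso}}$ preserves coproducts. This is the content of the last part of the proof of Proposition~\ref{prop:disjunctive}: in the $1$-category of groupoids, pullbacks commute with finite coproducts, as one sees by inspecting the standard construction.

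The only step requiring any care is the bookkeeping when passing between $\h\F_{\mathcal G'}$ and $\h\F^{\textup{iso}}_{\mathcal G'}$ to reduce coproduct preservation of $p^*$ to a strict-$1$-categorical statement; everything else is formal once one knows that the canonical model structure on $\cat{Grpd}$ lets us represent all maps by isofibrations while retaining finiteness.
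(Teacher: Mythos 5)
Your proof is correct and follows essentially the same path as the paper's: establish that coproducts in $\h\F_{\mathcal G}$ are created by the forgetful functor so that every $f_!$ preserves them, factor a faithful $f$ as an equivalence followed by a faithful isofibration to get the right adjoint from Construction~\ref{constr:psi-right-adjoints}, and reduce coproduct preservation of $p^*$ to the strict-pullback description on $\h\F^{\textup{iso}}$, where it is the $1$-categorical fact used in Proposition~\ref{prop:disjunctive}. The only cosmetic difference is that you route step one through the biset description from Lemma~\ref{lemma:grothendieck-restricted} rather than the slice description $\cat{FinSet}\downarrow\mathcal G$, which leads to the same conclusion.
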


\begin{rk}
We can actually describe the restriction of $f^*$ to $\h\F_{\mathcal G'}^{\text{iso}}\to\h\F_{\mathcal G}^{\text{iso}}$ for arbitrary faithful $f\colon\mathcal G\to\mathcal G'$ as the functor $f^{\text{pb}}$ given by ordinary pullback along $f$: indeed, if $f$ is an isofibration, this was verified in the above proof, and for an equivalence $f$ it is easy enough to check by hand that the composition of $f_!f^{\text{pb}}$ is isomorphic to the inclusion $\h\F_{\mathcal G'}^{\text{iso}}\hookrightarrow\h\F_{\mathcal G'}$, so that precomposing $f^\text{pb}$ with a quasi-inverse to this inclusion yields a quasi-inverse of $f_!\colon\h\F_{\mathcal G}^{\text{iso}}\to\h\F_{\mathcal G'}^{\text{iso}}$ as claimed.
\end{rk}

\begin{lemma}\label{lemma:mathscr-F-coprod}
Let $\mathcal G$ be a finite groupoid, and let $\mathcal G_1,\dots,\mathcal G_n$ be the components of $\mathcal G$. Then $\h\F_\mathcal G$ is equivalent to $\prod_{i=1}^n\h\F_{\mathcal G_i}$ via taking fibers over $\mathcal G_1,\dots,\mathcal G_n$ (i.e.~pullback along the inclusions $\mathcal G_i\hookrightarrow\mathcal G$).
\begin{proof}
We may assume without loss of generality that each $\mathcal G_i$ has only one object $G_i$ and it moreover suffices to prove this for $\h\F_{\mathcal G}^{\textup{iso}}$. A basic computation then shows that the diagram
\begin{equation*}
\begin{tikzcd}
\h\F_{\mathcal G}^{\textup{iso}}\arrow[r]\arrow[d,"\tni"'] &
\prod_{i=1}^n\h\F_{B\Aut(G_i)}^{\textup{iso}}\arrow[d,"\prod_{i=1}^n\tni"]\\
\Fun^{\text{free}}(\mathcal G,\cat{FinSet})\arrow[r, "\cong"'] & \prod_{i=1}^n\Fun^{\text{free}}(B\Aut(G_i),\cat{FinSet})
\end{tikzcd}
\end{equation*}
commutes strictly, where the top horizontal arrow is as above and the lower one is induced by the inclusions. The claim now follows from Lemma~\ref{lemma:grothendieck-restricted}.
\end{proof}
\end{lemma}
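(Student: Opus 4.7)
The plan is to follow the strategy suggested in the statement. Since pulling back along an equivalence of finite groupoids induces an equivalence of the associated slices $\F_{(-)}$, I may replace each $\mathcal G_i$ by the equivalent one-object groupoid $B\Aut(G_i)$ and thereby assume $\mathcal G \simeq \coprod_{i=1}^n B\Aut(G_i)$. Using the equivalence $\h\F_{\mathcal G}^{\textup{iso}} \hookrightarrow \h\F_{\mathcal G}$ together with its counterpart for each component, it then suffices to show that the pullback functor $\h\F_{\mathcal G}^{\textup{iso}} \to \prod_i \h\F_{B\Aut(G_i)}^{\textup{iso}}$ is an equivalence.

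To this end I would fit this functor into a square whose two vertical maps are the straightening equivalences $\tni$ of Lemma~\ref{lemma:grothendieck-restricted} and whose bottom edge is the restriction functor $\Fun^{\textup{free}}(\mathcal G, \cat{FinSet}) \to \prod_{i=1}^n \Fun^{\textup{free}}(B\Aut(G_i), \cat{FinSet})$. The bottom map is a tautological isomorphism of $1$-categories, since functors out of a coproduct of groupoids are determined by their restrictions to the summands and since $\mathcal G$-freeness is automatically checked component by component. Once strict commutativity of the square is known, a two-out-of-three argument forces the top arrow to be an equivalence as well, which is what we want.

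The main thing to check, then, is commutativity of the square, which I expect to be routine from the explicit formulas in Remark~\ref{rk:straightening-infinite}. For an isofibration $\rho\colon\mathcal X\to\mathcal G$ the functor $\tni\rho$ sends $G\in\mathcal G$ to $\pi_0(\rho^{-1}(G))$, and for $G$ belonging to the component $B\Aut(G_i)$ the strict fiber $\rho^{-1}(G)$ coincides on the nose with the strict fiber over $G$ of the pullback $\mathcal X\times_{\mathcal G}B\Aut(G_i)\to B\Aut(G_i)$, since points of $\mathcal X$ mapping into $B\Aut(G_i)\subset\mathcal G$ are exactly those of $\mathcal X\times_{\mathcal G}B\Aut(G_i)$. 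The analogous identifications hold on morphisms of $\mathcal G$ (via the lifting characterization of $(\tni\rho)(g)$, for which any lift produced over $\mathcal G$ automatically lies in the restriction) and on morphisms of $\h\F_{\mathcal G}^{\textup{iso}}$ (where $\tni$ is computed fiberwise via $\pi_0$), so both routes around the square agree strictly and the argument concludes.
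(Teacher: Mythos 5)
Your proposal is correct and follows the paper's proof essentially step for step: the same reduction to skeletal (one-object-per-component) $\mathcal G$, the same reduction to $\h\F_{\mathcal G}^{\textup{iso}}$, the same square built from the straightening functors $\tni$ of Lemma~\ref{lemma:grothendieck-restricted} and the tautological restriction isomorphism along the bottom, and the same two-out-of-three conclusion. The only difference is that you carry out the "basic computation" verifying strict commutativity that the paper leaves to the reader, which is a reasonable amount of detail to supply.
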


\begin{constr}\label{constr:symmetric-monoidal-core}
We equip $\h\F_{\mathcal G}$ with `the' cocartesian symmetric monoidal structure; once we have fixed such a choice of coproducts, $\psi$ factors uniquely through the $(2,2)$-category $\cat{SymMonCat}_{(2,2)}$ of small symmetric monoidal categories, symmetric monoidal functors, and symmetric monoidal transformations; similarly the above adjoints can be uniquely made into symmetric monoidal functors such that they are adjoints in $\cat{SymMonCat}_{(2,2)}$. In particular, this then induces a symmetric monoidal structure on $\core\h\F_{\mathcal G}$, $f_!$, and $f^*$ (assuming the latter exists).

We moreover define a functor $\tau=\tau_{\mathcal G}\colon\mathcal G\to\core\h\F_{\mathcal G}$ as follows: an object $G\in\mathcal G$ is sent to the map $G\colon *\to\mathcal G$ classifying $G$, and a map $g\colon G\to G'$ is sent to
\begin{equation*}
\begin{tikzcd}[column sep=1.5em]
*\arrow[dr, bend right=15pt, "G"'{name=G}]\arrow[rr,"="] &&*\arrow[dl, bend left=15pt, "G'"] \\
\twocell[urr, from={G}, "\scriptstyle g"{yshift=-5pt}] & \mathcal G.
\end{tikzcd}
\end{equation*}
\end{constr}

\begin{lemma}\label{lemma:F-G-corep}
Let $\mathcal C$ be any symmetric monoidal category. Then restricting along $\tau$ defines an equivalence $\Fun^{\otimes}(\core\h\F_{\mathcal G},\mathcal C)\to\Fun(\mathcal G,\mathcal C)$, where the left hand side denotes the category of strong symmetric monoidal functors and symmetric monoidal transformations.
\end{lemma}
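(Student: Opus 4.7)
\emph{Plan.} The strategy is to reduce to the case of a connected groupoid via Lemma~\ref{lemma:mathscr-F-coprod}, and then verify the universal property explicitly using the description of $\h\F_{B\Gamma}$ through the Grothendieck construction.

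If $\mathcal G = \coprod_{i=1}^k \mathcal G_i$ is the decomposition into components, Lemma~\ref{lemma:mathscr-F-coprod} produces an equivalence $\core\h\F_{\mathcal G} \simeq \prod_{i=1}^k \core\h\F_{\mathcal G_i}$ of symmetric monoidal categories (since the pullback along a coproduct-summand inclusion preserves coproducts), under which $\tau_{\mathcal G}$ sends an object $g \in \mathcal G_i$ to the tuple whose $i$-th coordinate is $\tau_{\mathcal G_i}(g)$ and whose other coordinates are the (empty) monoidal unit. Since $\Fun(\mathcal G,\mathcal C) = \prod_i \Fun(\mathcal G_i,\mathcal C)$, and since a strong symmetric monoidal functor out of a product $\prod_i \mathcal A_i$ (with componentwise tensor) is uniquely determined by its restrictions $F_i$ to the factors via $F(a_1,\dots,a_k) = F_1(a_1)\otimes\cdots\otimes F_k(a_k)$---with the analogous statement on symmetric monoidal natural transformations being a direct consequence of the braiding of $\mathcal C$---this reduces the claim to the case $\mathcal G \simeq B\Gamma$ for some finite group $\Gamma$.

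For $\mathcal G = B\Gamma$, the Grothendieck construction (Lemma~\ref{lemma:grothendieck-restricted}) identifies $\h\F_{B\Gamma}$ with the category of finite free left $\Gamma$-sets and equivariant maps, and passing to cores yields an equivalence $\core\h\F_{B\Gamma} \simeq \coprod_{n\geq 0} B(\Gamma\wr S_n)$ of symmetric monoidal categories, the monoidal structure being concatenation $B(\Gamma\wr S_m)\times B(\Gamma\wr S_n)\to B(\Gamma\wr S_{m+n})$. Under this identification, $\tau$ picks out the unique object of the $n=1$ component and induces the identity on automorphism groups. A functor $F\colon B\Gamma\to\mathcal C$ amounts to the data $(c,\rho)$ of an object $c=F(*)\in\mathcal C$ and a homomorphism $\rho\colon\Gamma\to\Aut_{\mathcal C}(c)$; we extend this to $\widetilde F\colon \core\h\F_{B\Gamma}\to\mathcal C$ by sending the object of the $n$-th summand to $c^{\otimes n}$ and an element $((\gamma_1,\dots,\gamma_n),\sigma)\in \Gamma\wr S_n$ to the composition of the permutation of factors by $\sigma$ (built from the symmetry constraints of $\mathcal C$) with $\rho(\gamma_1)\otimes\cdots\otimes\rho(\gamma_n)$. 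Strong symmetric monoidality of $\widetilde F$ and uniqueness of the extension then follow from the coherence axioms of $\mathcal C$; the claim on natural transformations is proved analogously, since any symmetric monoidal transformation $\widetilde F\to\widetilde G$ is forced to be $\alpha^{\otimes n}$ on the $n$-th summand, where $\alpha$ is its value at $\tau(*)$.

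The main obstacle is the bookkeeping verification that the above formula on $\Gamma\wr S_n$ is functorial with respect to the wreath product multiplication, which amounts to carefully tracking how the coordinate-wise $\rho$-actions interact with the symmetry isomorphisms of $\mathcal C$. This is an equivariant refinement of the classical fact that $\coprod_n BS_n$ is the free symmetric monoidal groupoid on a point, and no fundamentally new ideas beyond careful notation are required.
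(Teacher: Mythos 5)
Your argument is correct and follows the same route as the paper: reduce to connected components using Lemma~\ref{lemma:mathscr-F-coprod} and semiadditivity of $\cat{SymMonCat}$, then handle $\mathcal G\simeq B\Gamma$ via the identification $\core\h\F_{B\Gamma}\simeq\coprod_n B(\Gamma\wr\Sigma_n)$. The only difference is presentational: the paper delegates the identification to Construction~\ref{constr:frak-vs-bbb} and the universal property of $\mathfrak F_\Gamma$ to the cited Lemma~\ref{lemma:F-frak}, whereas you re-derive the identification via the Grothendieck construction (Lemma~\ref{lemma:grothendieck-restricted}) and spell out the wreath-product verification by hand.
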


For the proof of the lemma, we first recall the permutative groupoid $\mathfrak F_G$ from Construction~\ref{constr:mathfrak-FG}. The following observation is well-known, but we provide an argument for completeness:

\begin{lemma}\label{lemma:F-frak}
Let $G$ be a group and let $\mathcal C$ be any symmetric monoidal category. Then evaluating at $\cat{1}\in\mathfrak F_G$ defines an equivalence $\Fun^\otimes(\mathfrak F_G,\mathcal C)\to G\text{--}\mathcal C$, where the right hand side denotes the category of $G$-objects.
\begin{proof}
    A direct computation shows that $\mathfrak F_G$ represents the functor $\Cc\mapsto\Ob(G\text{--}\mathcal C)$ on the 1-category $\cat{PermCat}$ of permutative categories and \emph{strict} symmetric monoidal functors, via evaluation at \textbf{1}. We will now explain how deduce the analogous statement for the homotopy category of $\cat{SymMonCat}_{(2,1)}$; the claim will then follow by replacing $\Cc$ by $\Cc^{\mathcal K}$ for any plain category $\mathcal K$ and appealing to the Yoneda Lemma.

    By \cite[Theorem~3.1]{sharma}, $\cat{PermCat}$ admits a simplicial model structure where a map is a weak equivalence or fibration if and only if it is so in the usual model structure on $\cat{Cat}$. Our computation of the corepresented functor then shows that any lifting problem
    \[
        \begin{tikzcd}
            & \Cc\arrow[d, "p", two heads, "\sim"']\\
            \mathfrak F_G\arrow[ur, bend left=15pt,dashed]\arrow[r] & \Dd
        \end{tikzcd}
    \]
    where $p$ is an acyclic fibration (i.e.~an equivalence that is strictly surjective on objects) has a solution, i.e.\ $\mathfrak F_G$ is cofibrant with respect to this model structure. As in addition every object is fibrant, we conclude that the map
    \[
        \Hom_{\cat{PermCat}}(\mathfrak F_G,\Cc)\to
        \Hom_{\Ho(\cat{PermCat})}(\mathfrak F_G,\Cc)
    \]
    induced by the localization functor is just given by dividing out the model categorical notion of homotopy. As the model structure is simplicial, a path object is given by $\Cc^{J}$ where $J$ is the contractible groupoid with two objects; by another application of our copresentability statement, we see that two functors $\mathfrak F_G\to\Cc$ are homotopic if and only if they correspond to isomorphic $G$-objects.

    As the inclusion $\cat{PermCat}\hookrightarrow\cat{SymMonCat}$ induces an equivalence on homotopy categories by Mac Lane's strictification theorem (cf.\ \cite[Theorem~1.19]{perm-parsum-categorical}), we altogether conclude that evaluation at $\textbf{1}$ induces a bijection
    \[
        \Hom_{\Ho(\cat{SymMonCat})}(\mathfrak F_G,\Cc)\cong\{
            \text{isomorphism classes of $G$-objects in $\Cc$}\}
    \]
    for any permutative $\Cc$, and hence (using Mac Lane's strictification theorem again) for any symmetric monoidal $\Cc$. As explained above, this implies the lemma.
\end{proof}
\end{lemma}

\begin{proof}[Proof of Lemma~\ref{lemma:F-G-corep}]
As $\tau$ is natural with respect to pushforward, we may assume without loss of generality that each component of $\mathcal G$ consists of a single object, and by Lemma~\ref{lemma:mathscr-F-coprod} we can then reduce to the case that $\mathcal G=BG$ for some finite group $G$; more precisely, if $G_1,\dots,G_n$ are the (pairwise non-isomorphic) objects of $\mathcal G$, then the diagram
\begin{equation*}
\begin{tikzcd}
\coprod_{i=1}^n BG_i\arrow[d,"\cong"']\arrow[r,"\diag(\tau)"] &[.67em]\prod_{i=1}^n\h\F_{BG_i}\\
\mathcal G\arrow[r,"\tau"']&\h\F_{\mathcal G}\arrow[u,"\simeq"']
\end{tikzcd}
\end{equation*}
with the equivalence from Lemma~\ref{lemma:mathscr-F-coprod} on the right commutes strictly for the usual construction of fibers, and the $(2,1)$-category $\cat{SymMonCat}$ is semiadditive.

But for $\mathcal G=BG$, we have an obvious equivalence $\mathfrak F_G\to\core\h\F_{BG}$ compatible with the maps from $BG$, so the claim follows immediately from the previous lemma.
\end{proof}

\subsection{From \texorpdfstring{$\bm A^{\textup{gl}}$}{Agl} to symmetric monoidal categories} Our next goal is to extend $\nerve_\Delta(\psi)\colon\mathscr F\to\nerve_\Delta(\cat{Cat}^\amalg)$ to a functor on $(\Agl)^\op$ using the adjoints we constructed above. This is an instance of a general $2$-categorical construction:

\begin{prop}\label{prop:unfurl}
Let $\mathscr I$ be a strict $(2,1)$-category, and let $\mathscr I^\dagger\subset\mathscr I$ be a $2$-subcategory such that $(\nerve_\Delta(\mathscr I),\nerve_\Delta(\mathscr I),\nerve_\Delta(\mathscr I^\dagger))$ is an adequate triple. Let $\mathscr C$ be any strict $2$-category and write $\mathscr C_{(2,1)}$ for its underlying $(2,1)$-category, i.e.~the $2$-category obtained by throwing away all non-invertible $2$-cells. Moreover, let $\phi\colon\mathscr I\to\mathscr C$ be a strict $2$-functor such that for every $i\in\mathscr I^\dagger$ the functor $i_!\coloneqq\phi(i)$ admits a right adjoint $i^*$ and such that for every homotopy pullback diagram in $\nerve_\Delta(\mathscr I)$ as on the left in
\begin{equation*}
\begin{tikzcd}
A\arrow[r,"g"]\arrow[d,"j"',tail]&\twocell[dl, "\scriptstyle\sigma"{yshift=-8pt},yshift=3.5pt] B\arrow[d,"i",tail]\\
C\arrow[r,"f"'] & D
\end{tikzcd}\hskip1in
\begin{tikzcd}
\phi(A)\arrow[r,"g_!"]\twocell[dr, "\scriptstyle\sigma_\lozenge"{yshift=-8pt},yshift=3.5pt] &\phi(B)\\
\phi(C)\arrow[u,"j^*"]\arrow[r,"f_!"'] & \phi(D)\arrow[u,"i^*"']
\end{tikzcd}
\end{equation*}
(which we drew as a diagram in $\mathscr I$ by omitting the diagonal edge $\phi(A)\to\phi(D)$ and already pasting the two natural isomorphisms) with vertical arrows belonging to $\mathscr I^\dagger$, the canonical mate $\sigma_\lozenge$ of $\sigma_!$ depicted on the right is an isomorphism.

Then there is a unique functor $\Phi\colon A^{\textup{eff}}(\nerve_\Delta(\mathscr I),\nerve_\Delta(\mathscr I),\nerve_\Delta(\mathscr I^\dagger))\to\nerve_\Delta(\mathscr C_{(2,1)})$ that sends a $2$-simplex of the form
\begin{equation*}
\begin{tikzcd}[column sep=1.35em,row sep=1.875em]
&& C\arrow[ddll, bend right=40pt, shift right=5pt, "j_{02}"',""{name=j02}]\arrow[ddrr, bend left=40pt, shift left=5pt, "g_{02}", ""'{name=g02}]\arrow[dl, "j_{01}"',tail]\arrow[dr, "g_{01}"]\\
& \twocell[from=j02, "\scriptstyle\rho"{yshift=-5pt,xshift=-5pt}]B\arrow[dl, "j_{12}"',tail]\arrow[dr, "f"']\twocell[rr,"\scriptstyle\sigma"{yshift=6pt}] && D\arrow[dl, "i",tail]\arrow[dr, "g_{12}"]\twocell[from=g02,"\scriptstyle\tau"{yshift=-5pt,xshift=5pt}]\\
A && F && E
\end{tikzcd}
\end{equation*}
(where we have again omitted the edge $C\to F$ and already pasted the two transformations filling the middle square) to the $2$-simplex
\begin{equation*}
\begin{tikzcd}[column sep=small]
& \phi(F)\arrow[dr, bend left=10pt, "g_{12!}i^*"]\\
\phi(A)\arrow[rr,"g_{02!}j_{02}^*"'{name=bot}]\arrow[ur, bend left=10pt, "f_!j_{12}^*"]\twocell[from=bot,ur] && \phi(E)
\end{tikzcd}
\end{equation*}
given as the pasting
\begin{equation*}
\begin{tikzcd}[column sep=1.35em,row sep=1.875em]
&& \twocell[dd,"\scriptstyle\sigma_\lozenge"{xshift=10pt}]C\arrow[from=ddll, bend left=40pt, shift left=5pt, "j_{02}^*",""{name=j02}]\arrow[ddrr, bend left=40pt, shift left=5pt, "g_{02!}", ""'{name=g02}]\arrow[from=dl, "j_{01}^*"]\arrow[dr, "g_{01!}"]\\
& \twocell[from=j02, "\scriptstyle(\mskip-.3\thinmuskip\rho^*\mskip-.3\thinmuskip)^{\!{-}\!1}"{yshift=-7pt,xshift=-1pt},xshift=2pt,yshift=-2pt]B\arrow[from=dl, "j_{12}^*"]\arrow[dr, "f_!"'] && D\arrow[from=dl, "i^*"']\arrow[dr, "g_{12!}"]\twocell[from=g02,"\scriptstyle\tau_!"{yshift=-5pt,xshift=5pt}]\\
A && F && E
\end{tikzcd}
\end{equation*}
where $\rho^*$ is the total mate of $\rho_!$ and $\sigma_\lozenge$ is defined as above.
\end{prop}

The proof of the proposition is deferred to the appendix. As we will only apply this to the functor $\psi$ landing in $\cat{Cat}^\amalg$, and since we will ultimately be only interested in the postcomposition of this with $\core\colon\cat{Cat}^\amalg_{(2,1)}\to\cat{SymMonGrpd}$, we could have also instead applied Barwick's \emph{unfurling construction} for Waldhausen bicartesian fibrations \cite[Section~11]{barwick-mackey}. However, Barwick's result first yields a (Waldhausen) cocartesian fibration, which one then has to straighten into a functor, and as we will at several places below need the above explicit description of the resulting functor on $2$-simplices, proving the $2$-categorical proposition directly seems to be less work than unravelling Barwick's construction.

We moreover remark that the analogous statement for the `classical' span \hbox{$2$-category} (and without any of the above strictness assumptions) appears as \cite[Theorem~5.2.1]{balmer-mackey}; thus, if one is willing to believe that $\Agl$ is equivalent to the Duskin nerve of the corresponding span $2$-category in a compatible way, one could as well deduce this from \emph{loc.~cit.}

In order to apply the above proposition to our context we note:

\begin{lemma}
Let
\begin{equation}\label{diag:F-homotopy-pull-base}
\begin{tikzcd}
\mathcal A\arrow[r,"g"]\arrow[d,"j"',tail] &\twocell[dl,"\scriptstyle\sigma"{yshift=-7pt},yshift=2pt] \mathcal B\arrow[d, "i",tail]\\
\mathcal C\arrow[r,"f"']&\mathcal D
\end{tikzcd}
\end{equation}
be a homotopy pullback in $\mathscr F$ such that the vertical functors are faithful. Then the mate transformation $\sigma_\lozenge\colon g_!j^*\Rightarrow i^*f_!$ is an isomorphism of functors $\h\F_{\mathcal C}\to\h\F_{\mathcal B}$.
\begin{proof}
The claim is clear if $i$ is an equivalence; by the compatibility of mates with pastings we may therefore assume without loss of generality that $\sigma$ is the identity and that $(\ref{diag:F-homotopy-pull-base})$ is an ordinary pullback along the isofibration $i$. In this case, we immediately see using the explicit description of the adjunctions given in Construction~\ref{constr:psi-right-adjoints} that for an object $\pi\colon\mathcal X\to\mathcal B$, $(\id_\lozenge)_{\mathcal X}$ is represented by the canonical comparison map $\mathcal X\times_{\mathcal B}\mathcal A=\mathcal X\times_{\mathcal B}(\mathcal B\times_{\mathcal D}\mathcal C)\to \mathcal X\times_{\mathcal D}\mathcal C$, which is even an isomorphism of groupoids.
\end{proof}
\end{lemma}

Thus, we may apply Proposition~\ref{prop:unfurl} to extend $\psi$ to a functor $\Psi\colon(\Agl)^\op\cong A^{\text{eff}}(\mathscr F,\mathscr F,\mathscr F_\dagger)\to\nerve_\Delta(\cat{Cat}^\amalg_{(2,1)})$. As announced long ago, we want to prove:

\begin{thm}\label{thm:Agl-ord-to-mathcal}
The composition $\core\circ\Psi\colon (\Agl)^\op\to\nerve_\Delta(\cat{SymMonCat}_{(2,1)})$ is fully faithful.
\end{thm}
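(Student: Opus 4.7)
The plan is to prove fully faithfulness by computing the induced map on mapping spaces for each pair of finite groupoids $\mathcal G, \mathcal H$, and identifying both sides with the functor groupoid $\Fun(\mathcal G, \core\h\F_\mathcal H)$ in a compatible way.

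For the source, by Barwick's construction and the adequacy of our triple (Proposition~\ref{prop:disjunctive}), the mapping space $\textup{Map}_{(\Agl)^\op}(\mathcal G, \mathcal H)$ is the nerve of the core of the groupoid of spans $\mathcal G \xleftarrow{j} \mathcal X \xrightarrow{g} \mathcal H$ with $j$ faithful. In the notation of the paper, this is $\core(\F_{\mathcal H, \mathcal G})$ (so that the leg to the second factor, i.e.\ $\pr_\mathcal G$, is required to be faithful). Lemma~\ref{lemma:grothendieck-restricted} then identifies this with the core of $\Fun^{\mathcal H\textup{-free}}(\mathcal H \times \mathcal G, \cat{FinSet})$, and ordinary currying turns the latter into $\Fun(\mathcal G, \Fun^{\textup{free}}(\mathcal H, \cat{FinSet})) \simeq \Fun(\mathcal G, \core\h\F_\mathcal H)$; note that we land in the core automatically since every morphism of the groupoid $\mathcal G$ necessarily induces an \emph{isomorphism} in $\Fun^{\textup{free}}(\mathcal H, \cat{FinSet})$. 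Explicitly, a span is sent to the \emph{fiber functor} $G \mapsto (j^{-1}(G) \to \mathcal H)$.

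For the target, Lemma~\ref{lemma:F-G-corep} provides an equivalence $\Fun^\otimes(\core\h\F_\mathcal G, \core\h\F_\mathcal H) \simeq \Fun(\mathcal G, \core\h\F_\mathcal H)$ via restriction along $\tau_\mathcal G$. Since $\core\h\F_\mathcal H$ is a groupoid, so is the right hand side, hence it equals its own core. Thus $\textup{Map}_{\nerve_\Delta(\cat{SymMonCat}_{(2,1)})}(\core\h\F_\mathcal G, \core\h\F_\mathcal H) \simeq \Fun(\mathcal G, \core\h\F_\mathcal H)$ as well.

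It remains to verify that $\core \circ \Psi$ implements, through these identifications, the identity of $\Fun(\mathcal G, \core\h\F_\mathcal H)$. This is a direct computation using Proposition~\ref{prop:unfurl}: $\core \circ \Psi$ sends a span $\mathcal G \xleftarrow{j} \mathcal X \xrightarrow{g} \mathcal H$ to the strong symmetric monoidal functor $g_! \circ j^*\colon\core\h\F_\mathcal G\to\core\h\F_\mathcal H$. Pre-composing with $\tau_\mathcal G$ and using the description of $j^*$ from Construction~\ref{constr:psi-right-adjoints}, this becomes the functor $G \mapsto (\mathcal X \times_\mathcal G G \to \mathcal H)$, which is precisely the fiber functor associated to the span. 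The main obstacle will be extending this compatibility from the level of $0$-simplices of the mapping spaces to the full mapping simplicial sets, i.e.\ verifying it on morphisms of spans and on higher simplices; however, this essentially amounts to tracing through the $2$-functoriality of the Grothendieck construction (Lemma~\ref{lemma:grothendieck-restricted}), of the characterization in Lemma~\ref{lemma:F-G-corep}, and of the unfurling procedure in Proposition~\ref{prop:unfurl}, all of which are built into the $2$-categorical constructions used above.
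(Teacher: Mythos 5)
Your strategy is exactly the one the paper follows: identify both right mapping spaces with $\Fun(\mathcal G,\core\h\F_{\mathcal H})$, using Lemma~\ref{lemma:grothendieck-restricted} and currying on the Burnside side and Lemma~\ref{lemma:F-G-corep} on the symmetric monoidal side, and then check that $\core\circ\Psi$ implements the identity under these identifications. The individual identifications you write down are correct (including the currying step and the observation that one lands in the core automatically). However, two points that the paper treats as genuine mathematical content are asserted rather than argued in your proposal.

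First, your opening identification $\Map_{(\Agl)^\op}(\mathcal G,\mathcal H)\simeq\core(\F_{\mathcal H,\mathcal G})$ is not a formal consequence of ``Barwick's construction and adequacy.'' The mapping spaces of $A^{\text{eff}}$ are defined via certain twisted-arrow/span diagrams, and for a $(2,1)$-categorical input they are a priori only $2$-truncated. The statement that they are equivalent to cores of slices is exactly Barwick's 3.7, which the paper points out is stated in \cite{barwick-mackey} \emph{without proof}; the paper therefore replaces it by a direct argument. Concretely, the paper unpacks a morphism between two objects of $\Hom^{\textup{R}}$ as a diagram with $2$-cells $\sigma_1,\sigma_2,\tau$ and shows, using faithfulness of the contravariant legs, that between any two such diagrams there is at most one homotopy and that it exists iff a certain $1$-categorical compatibility holds — this is what yields $1$-truncation and the isomorphism $\core\h\F_{\mathcal G,\mathcal H}\cong\h\Hom^{\textup{R}}(\mathcal H,\mathcal G)$. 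If you are going to cite Barwick's 3.7 you should say so explicitly and note it is a folklore result there; otherwise this step needs an argument.

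Second, your verification that $\core\circ\Psi$ realizes the identity is carried out only on objects, and the final sentence waves at the morphism- and higher-cell-level check. This is where the paper's intermediate functor $\Psi'\colon\core\h\F_{\mathcal G,\mathcal H}\to\Fun^{\amalg}(\h\F_{\mathcal H},\h\F_{\mathcal G})$ earns its keep: it lets one compare $\Psi$ on $1$- and $2$-simplices of $\Hom^{\textup{R}}$ against the explicit formula of Proposition~\ref{prop:unfurl}, and then reduce the equivalence of $\core\circ\Psi'$ to a strictly commuting square involving the straightening $\tni$, currying, and restriction along $\tau$. ``Tracing through $2$-functoriality'' is indeed the right idea, but the specific thing one must check is that the mate $(\hat\alpha^{-1}_{\mathcal H})_\lozenge$ appearing in $\Psi'$ on morphisms of spans matches the straightening of the corresponding morphism of bisets — this is not purely formal and the paper records it as ``a basic computation.'' So the argument is fine in outline and matches the paper's, but both the $1$-truncation of the source mapping space and the compatibility check on $1$-cells need to be actually carried out.
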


We denote the essential image of the above functor by $\mathcal A_{\textup{gl}}$ (and by slight abuse of notation, we will also denote the corresponding $2$-subcategory of $\cat{SymMonCat}_{(2,1)}$ by the same symbol); by Lemma~\ref{lemma:mathscr-F-coprod} it consists precisely of those symmetric monoidal categories that are equivalent to finite products of copies of $\core\h\F_{BH}$ or equivalently $\mathfrak F_H$ for varying finite groups $H$.

The proof of the theorem requires some preparations.

\begin{constr}
Let $\mathcal G,\mathcal H$ be finite groupoids. We define a functor
\begin{equation*}
\Psi'\colon\core\h\F_{\mathcal G,\mathcal H}\to\Fun^{\amalg}(\h\F_{\mathcal H},\h\F_{\mathcal G})
\end{equation*}
as follows: an object $\pi\colon\mathcal X\to\mathcal G\times\mathcal H$ is sent to the functor $\pi_{\mathcal G!}\pi_{\mathcal H}^*$ (where $\pi_{\mathcal G},\pi_{\mathcal H}$ denote the components of $\pi$), and the class of a morphism
\begin{equation*}
\begin{tikzcd}[column sep=small]
\mathcal X\arrow[rr,"\alpha"]\arrow[rd,"\pi"'{name=pi}, bend right=10pt] && \mathcal Y\arrow[ld, "\rho", bend left=10pt]\\
{}\twocell[urr,from=pi, "\scriptstyle\hat\alpha"{yshift=-5pt}] &\mathcal H
\end{tikzcd}
\end{equation*}
is sent to the pasting
\begin{equation*}
\begin{tikzcd}[column sep=2.2em, row sep=2.2em]
\h\F_{\mathcal H}\arrow[d,"="']\twocell[dr, "\scriptstyle(\hat\alpha_{\mathcal H}^{-1})_\lozenge"{yshift=-10pt},yshift=4pt]\arrow[r,"\pi_{\mathcal H}^*"] & \h\F_{\mathcal X}\twocell[dr,"\scriptstyle\hat\alpha_{\mathcal G!}"{yshift=-10pt},yshift=4pt]\arrow[r,"\pi_{\mathcal G!}"]\arrow[d,"\alpha_!"] & \h\F_{\mathcal G}\arrow[d,"="]\\
\h\F_{\mathcal H}\arrow[r,"\rho_{\mathcal H}^*"'] & \h\F_{\mathcal Y}\arrow[r, "\rho_{\mathcal G!}"'] & \h\F_{\mathcal G}
\end{tikzcd}
\end{equation*}
where $(\hat\alpha_{\mathcal H}^{-1})_\lozenge$ again denotes the canonical mate. We omit the easy verification that this is well-defined and a functor.
\end{constr}

\begin{prop}
The composition $\core\circ\Psi'$ defines an equivalence
\begin{equation*}
\core\h\F_{\mathcal G,\mathcal H}\simeq\Fun^\otimes(\core\h\F_{\mathcal H},\core\h\F_{\mathcal G}).
\end{equation*}
\begin{proof}
It suffices to show this after restricting to $\core\h\F_{\mathcal G,\mathcal H}^{\textup{iso}}$. In this case, the right adjoints $\pi_{\mathcal H}^*$ appearing in the construction of $\Psi'$ can just be taken to be the ones from Construction~\ref{constr:psi-right-adjoints}, i.e.~they are given on objects by ordinary pullback. We now agree on a specific choice of these pullbacks in one instance: namely, for any object of $\h\F_{\mathcal H}$ of the form $\zeta\colon *\to\mathcal H$ and any object $\pi\colon\mathcal X\to\mathcal G\times\mathcal H$ of $\h\F_{\mathcal G,\mathcal H}^\text{iso}$ we agree to take the pullback defining $\pi_{\mathcal H}^*(\zeta)$ as the ordinary fiber of $\pi_{\mathcal H}$ over $\zeta(*)$, i.e.~the subgroupoid $\pi_{\mathcal H}^{-1}(\zeta(*))\subset\mathcal X$, with structure map given by the inclusion. With these conventions, a basic computation then shows that the diagram
\begin{equation*}
\begin{tikzcd}
\core\h\F_{\mathcal G,\mathcal H}\arrow[d,"\tni"']\arrow[r,"\core\circ\Psi'"] & \Fun^\otimes(\core\h\F_{\mathcal H},\core\h\F_{\mathcal G})\arrow[dd,"\tau^*"]\\
\core\Fun^{\text{$\mathcal G$-free}}(\mathcal G\times\mathcal H,\cat{FinSet})\arrow[d,"\cong"']\\
\core\Fun(\mathcal H,\Fun^{\text{free}}(\mathcal G,\cat{FinSet})) & \arrow[l,"\tni"] \Fun(\mathcal H,\core\h\F_{\mathcal G})
\end{tikzcd}
\end{equation*}
actually commutes strictly, where the unlabelled isomorphism is given by currying. The claim now follows from Lemma~\ref{lemma:grothendieck-restricted} together with Lemma~\ref{lemma:F-G-corep}.
\end{proof}
\end{prop}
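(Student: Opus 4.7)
The natural plan is to chain together the straightening--unstraightening of Lemma~\ref{lemma:grothendieck-restricted} with the corepresentability in Lemma~\ref{lemma:F-G-corep} and then verify that the resulting equivalence agrees with $\core\circ\Psi'$. First I reduce to the isofibration model $\h\F_{\mathcal G,\mathcal H}^{\textup{iso}}$, which is equivalent to $\h\F_{\mathcal G,\mathcal H}$; on this subcategory the right adjoints $\pi_\mathcal H^*$ used in the definition of $\Psi'$ can be taken to be ordinary pullbacks as in Construction~\ref{constr:psi-right-adjoints}, and in particular for $\zeta\colon*\to\mathcal H$ and an isofibration $\pi\colon\mathcal X\to\mathcal G\times\mathcal H$ I can choose $\pi_\mathcal H^*(\zeta)$ to be the strict preimage $\pi_\mathcal H^{-1}(\zeta(*))\subset\mathcal X$ with its induced structure map to $\mathcal G$.

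Next I unravel both sides of the claimed equivalence. For the target, Lemma~\ref{lemma:F-G-corep} with $\mathcal C=\core\h\F_{\mathcal G}$ yields $\Fun^\otimes(\core\h\F_{\mathcal H},\core\h\F_{\mathcal G})\simeq\Fun(\mathcal H,\core\h\F_{\mathcal G})$ via restriction along $\tau_{\mathcal H}$. For the source, Lemma~\ref{lemma:grothendieck-restricted} identifies $\core\h\F_{\mathcal G,\mathcal H}^{\textup{iso}}$ with $\core\Fun^{\textup{$\mathcal G$-free}}(\mathcal G\times\mathcal H,\cat{FinSet})$; currying in $\mathcal H$ and then applying the straightening equivalence for $\h\F_{\mathcal G}$ pointwise turns this into $\Fun(\mathcal H,\core\h\F_{\mathcal G})$. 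Chaining these identifications produces an abstract equivalence of the desired shape.

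The real content---and the step I expect to be the main obstacle---is to check that this composite coincides with $\tau_{\mathcal H}^*\circ\core\circ\Psi'$. With the strict choices above, the value of $\Psi'(\pi)$ at $\tau_\mathcal H(\zeta)$ is literally the inclusion $\pi_\mathcal H^{-1}(\zeta(*))\hookrightarrow\mathcal X\xrightarrow{\pi_\mathcal G}\mathcal G$, and its image under the $\tni$ of Remark~\ref{rk:straightening-infinite} is precisely $\pi_0$ of the strict fibre together with its $\mathcal G$-decomposition, matching the value at $\zeta$ of the curried straightening of $\pi$ on the nose rather than merely up to coherent isomorphism. The $2$-cell $\hat\alpha$ attached to a morphism in $\core\h\F_{\mathcal G,\mathcal H}^{\textup{iso}}$ lives over $\mathcal H$ and, via the mate $(\hat\alpha_\mathcal H^{-1})_\lozenge$ built into the definition of $\Psi'$, supplies exactly the naturality data the currying picture demands; once these are carefully lined up, the entire square commutes strictly and the proposition follows.
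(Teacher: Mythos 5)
Your proposal reduces to the isofibration model, fixes the same strict fibre model for $\pi_{\mathcal H}^*(\zeta)$, identifies source and target via Lemma~\ref{lemma:grothendieck-restricted} and Lemma~\ref{lemma:F-G-corep} respectively, and verifies strict commutativity of the comparison square — this is precisely the paper's proof. (One small imprecision: in $\core\h\F_{\mathcal G,\mathcal H}^{\textup{iso}}$ the $2$-cell $\hat\alpha$ is the identity, so the naturality data you invoke comes from the canonical mate $(\id_\lozenge)$, not from $\hat\alpha$ itself, but the argument goes through as you sketch it.)
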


\begin{proof}[Proof of Theorem~\ref{thm:Agl-ord-to-mathcal}]
Let $\mathcal G,\mathcal H\in\mathscr F$. We have to show that $\Psi$ induces a homotopy equivalence
\begin{equation}\label{eq:hom-spaces-Psi}
\Hom^{\textup{R}}_{A^{\text{eff}}(\mathscr F,\mathscr F,\mathscr F_\dagger)}(\mathcal H,\mathcal G)\to\Hom^{\textup{R}}_{\nerve_\Delta(\cat{SymMonCat}_{(2,1)})}(\core\h\F_{\mathcal H},\core\h\F_{\mathcal G})
\end{equation}
of the right mapping spaces from \cite[p.\ 27]{htt}. The right hand side is clearly a $1$-truncated Kan-complex; on the other hand, the left hand side is a $2$-truncated Kan-complex (as a subcategory of a category of functors into a $2$-category), and we claim that it is in fact again $1$-truncated; once we know this, it will be enough to show that $(\ref{eq:hom-spaces-Psi})$ induces an equivalence of homotopy categories.

For this, let us first describe the mapping space on the left explicitly: its objects are the spans $\mathcal H\xleftarrow{i} \mathcal X\xrightarrow{f} \mathcal G$ such that $i$ is faithful, and a morphism from such a span to another object $\mathcal H\xleftarrow{j} \mathcal Y\xrightarrow{g} \mathcal G$ is given by a diagram as on the left in
\begin{equation*}
\begin{tikzcd}[column sep=1.35em,row sep=1.875em]
&& \mathcal X\arrow[dd]\arrow[ddrr, bend left=40pt, shift left=5pt, "f", ""'{name=g02}]\arrow[dl, "i"',tail]\arrow[dr, "\alpha"]\\
& \mathcal H\arrow[dl, equal]\twocell[from=r,"\scriptstyle\sigma_1"{yshift=-7pt}]\arrow[dr, equal] &{}& \twocell[from=l,"\scriptstyle\sigma_2"{yshift=-7pt}]\mathcal Y\arrow[dl, "j",tail]\arrow[dr, "g"]\twocell[from=g02,"\scriptstyle\tau"{yshift=-5pt,xshift=5pt}]\\
\mathcal H&& \mathcal H && \mathcal G
\end{tikzcd}
\qquad\qquad
\begin{tikzcd}[column sep=1.35em,row sep=1.875em]
&& \mathcal X\arrow[dd]\arrow[ddrr, bend left=40pt, shift left=5pt, "f", ""'{name=g02}]\arrow[dl, "i"',tail]\arrow[dr, "\alpha'"]\\
& \mathcal H\arrow[dl, equal]\twocell[from=r,"\scriptstyle\sigma_1'"{yshift=-7pt}]\arrow[dr, equal] &{}& \twocell[from=l,"\scriptstyle\sigma_2'"{yshift=-7pt}]\mathcal Y\arrow[dl, "j",tail]\arrow[dr, "g"]\twocell[from=g02,"\scriptstyle\tau'"{yshift=-5pt,xshift=5pt}]\\
\mathcal H&& \mathcal H && \mathcal G\rlap{.}
\end{tikzcd}
\end{equation*}
By direct inspection using faithfulness of the contravariant legs, for any other such diagram (as depicted on the right), there is at most one homotopy between them, and such a homotopy exists if and only if there is an isomorphism $\phi\colon\alpha\cong\alpha'$ such that the pasting of $\phi$ with $\rho$ agrees with $\rho'$ and the pasting of $\phi$ with $(\sigma'_2)(\sigma'_1)^{-1}$ agrees with $\sigma_2\sigma_1^{-1}$. Thus, $\Hom^{\textup{R}}_{A^{\text{eff}}(\mathscr F,\mathscr F,\mathscr F_\dagger)}$ is $1$-truncated and we have an isomorphism of categories
\begin{equation*}
\core\h\F_{\mathcal G,\mathcal H}\to \h\Hom^{\textup{R}}_{A^{\text{eff}}(\mathscr F,\mathscr F,\mathscr F_\dagger)}(\mathcal H,\mathcal G)
\end{equation*}
sending $\pi\colon\mathcal X\to\mathcal G\times\mathcal H$ to the span $\mathcal H\xleftarrow{\pi_{\mathcal H}}\mathcal X\xrightarrow{\pi_{\mathcal G}}\mathcal G$, and a morphism from $\pi$ to $\rho\colon\mathcal Y\to\mathcal G\times\mathcal H$ represented by an equivalence $\alpha\colon\mathcal X\to\mathcal Y$ together with an isomorphism $\hat\alpha\colon \pi\cong\rho\alpha$ to the class of
\begin{equation*}
\begin{tikzcd}[column sep=1.35em,row sep=1.875em]
&& \mathcal X\arrow[dd, "\pi_{\mathcal H}"{description}]\arrow[ddrr, bend left=40pt, shift left=5pt, "\pi_{\mathcal G}", ""'{name=g02}]\arrow[dl, "\pi_{\mathcal H}"',tail]\arrow[dr, "\alpha"]\\
& \mathcal H\arrow[dl, equal]\arrow[dr, equal]&\twocell[l,"\scriptstyle\id"{yshift=-7pt}]{}\twocell[r,"\scriptstyle\hat\alpha_{\mathcal H}"{yshift=-7pt}] & \mathcal Y\arrow[dl, "\rho_{\mathcal H}",tail]\arrow[dr, "\rho_{\mathcal G}"]\twocell[from=g02,"\scriptstyle\hat\alpha_{\mathcal G}"{yshift=-6pt,xshift=6pt}]\\
\mathcal H&& \mathcal H && \mathcal G,
\end{tikzcd}
\end{equation*}
also cf.~\cite[3.7]{barwick-mackey}, which states (without proof) the existence of an equivalence between the mapping spaces of the effective Burnside category of a general $\infty$-category $\mathscr I$ with all pullbacks and the cores of certain slices of $\mathscr I$.

Similarly (but much easier), we have an isomorphism
\begin{equation*}
\h\Hom^{\text{R}}(\core\h\F_{\mathcal H},\core\h\F_{\mathcal G})\cong\Fun^\otimes(\core\h\F_{\mathcal H},\core\h\F_{\mathcal G})
\end{equation*}
that is the identity on objects and sends the class of
\begin{equation*}
\begin{tikzcd}[column sep=small]
& \core\h\F_{\mathcal H}\arrow[dr, bend left=10pt, "g"]\\
\core\h\F_{\mathcal H}\arrow[rr,"f"'{name=bot}]\arrow[ur, bend left=10pt, equal]\twocell[from=bot,ur,"\scriptstyle\sigma"{xshift=8pt}] && \core\h\F_{\mathcal G}
\end{tikzcd}
\end{equation*}
simply to $\sigma\colon f\Rightarrow g$. Using the explicit description of $\Psi$ on $2$-simplices from Proposition~\ref{prop:unfurl}, we then see that the resulting composition $\core\h\F_{\mathcal G,\mathcal H}\cong
\h\Hom^{\textup{R}}(\mathcal H,\mathcal G)\to\h\Hom^{\textup{R}}(\core\h\F_{\mathcal H},\core\h\F_{\mathcal G})\cong\Fun^\otimes(\core\h\F_{\mathcal H},\core\h\F_{\mathcal G})$ is exactly $\core\circ\Psi'$, so it is an equivalence by the previous proposition, and hence so is $(\ref{eq:hom-spaces-Psi})$ as desired.
\end{proof}

\subsection{Special global \texorpdfstring{$\bm\Gamma$}{Γ}-spaces vs.\ global Mackey functors} We have now done most of the hard work necessary to prove our first main result:

\begin{thm}\label{thm:ucom}
There is an equivalence of $\infty$-categories
\begin{equation*}
(\cat{$\bm\Gamma$-$\bm{E\mathcal M}$-SSet}_*^{\textup{special}})^\infty\simeq\Fun^\times(\Agl,\mathscr S).
\end{equation*}
\end{thm}

The only missing ingredient is the following:

\begin{prop}
    For each finite group $G$ and every symmetric monoidal \emph{groupoid} $\Cc$ the functor $\Gamma_\textup{gl}\colon\nerve_\Delta(\cat{SymMonCat}_{(2,1)})\to(\cat{$\bm\Gamma$-$\bm{E\mathcal M}$-SSet}_*^\textup{special})^\infty$ from Construction~\ref{constr:segal-may-shimada-shimakawa} induces an equivalence of mapping spaces
    \[
        \maps(\mathfrak F_G,\Cc)\simeq\maps(\Gamma_\textup{gl}\mathfrak F_G,\Gamma_\textup{gl}(\Cc)).
    \]
    In particular, $\Gamma_\textup{gl}$ is fully faithful when restricted to $\mathcal A_\textup{gl}$.
    \begin{proof}
        We may assume without loss of generality that $G$ is a universal subgroup of $\mathcal M$. Then Theorem~\ref{thm:fixed-points-corep} shows that $\Gamma_\textup{gl}(\mathfrak F_G)$ corepresents the functor sending a special global $\Gamma$-space $X$ to $X(1^+)^H$ via evaluation at a certain explicit element $\tau$.

        On the other hand, Lemma~\ref{lemma:F-G-corep} shows that $\mathfrak F_G$ corepresents the functor $\Cc\mapsto\nerve(G\text{--}\Cc)$ on $\nerve_\Delta(\cat{SymMonCat}_{(2,1)})$ via evaluation at the $G$-object $\rho\colon BG\to\mathfrak F_G$ sending $g$ to $g\colon\bm1\to\bm1$.

        Finally, we have an equivalence $\Fun(E\mathcal M,\Cc)^G\to G\text{-}\Cc$ sending a $G$-fixed functor $F$ to $F(1)$ with $g\in G$ acting via $F(g,1)$, and sending a $G$-fixed transformation $\tau\colon F\Rightarrow F'$ to $\tau_1\colon F(1)\to F'(1)$.

        A straight-forward computation then shows that the composite
        \[
            \maps(\mathfrak F_G,\Cc)\simeq \nerve(G\text{--}\Cc)\simeq\nerve\Fun(E\mathcal M,\Cc)^G\simeq\maps(\Gamma_\text{gl}\mathfrak F_G,\Gamma_\text{gl}\Cc)
        \]
        of natural transformations sends the identity of $\mathfrak F_G$ to the identity of $\Gamma_\text{gl}\mathfrak F_G$. The same holds for the map induced by $\Gamma_\text{gl}$, so the latter has to agree with the above composite by the Yoneda Lemma; in particular, it is an equivalence, as claimed.
    \end{proof}
\end{prop}

\begin{proof}[Proof of Theorem~\ref{thm:ucom}]
By Proposition~\ref{prop:gamma-mackey-geometric} we have an equivalence
\[
    (\cat{$\bm\Gamma$-$\bm{E\mathcal M}$-SSet}_*^\text{special})^\infty\simeq\Fun^\times(\AglGamma^\op,\mathscr S),
\]
while Theorem~\ref{thm:Agl-ord-to-mathcal} together with the previous proposition provides a chain of equivalences $A^\text{gl}\simeq\mathcal A_\text{gl}^\op\simeq\AglGamma^\op$; the claim follows immediately.
\end{proof}

\section{Global spectral Mackey functors vs.~global spectra}\label{sec:spectra}
Building on the above results, we will prove Theorem~\ref{introthm:mackey-vs-spectra} on the comparison between global spectra and spectral Mackey functors on $\Agl$ in this section.

\subsection{A reminder on global spectra}
Our reference model of global stable homotopy theory is Hausmann's global model structure on the category of \emph{symmetric spectra} in the sense of \cite{hss}. In order to define this, we need:

\begin{defi}
A symmetric spectrum $X$ is called a \emph{global $\Omega$-spectrum} if for every finite group $H$, every finite faithful $H$-set $A$, and every finite $H$-set $B$ the derived adjoint structure map
\begin{equation*}
X(A)^H\to\big(\cat{R}\Omega^BX(A\amalg B)\big)^H
\end{equation*}
is a weak homotopy equivalence. Here we are deriving $\Omega$ with respect to the usual equivariant model structure on $\cat{$\bm H$-SSet}$, for example by precomposing with the singular set-geometric realization adjunction.
\end{defi}

\begin{thm}
There is a unique model structure on the category $\cat{Spectra}$ of symmetric spectra in which a map $f$ is a weak equivalence or fibration if and only if $f(A)^H$ is a weak homotopy equivalence or Kan fibration, respectively, for all finite groups $H$ and all finite faithful $H$-sets $A$. We call this the \emph{global level model structure}.

Furthermore, the global level model structure admits a Bousfield localization whose fibrant objects are precisely the level fibrant global $\Omega$-spectra. We call the resulting model structure the \emph{global model structure} and its weak equivalences the \emph{global weak equivalences}.
\begin{proof}
See \cite[Proposition~2.5 and Theorem~2.17]{hausmann-global}.
\end{proof}
\end{thm}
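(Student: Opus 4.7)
The plan is to establish the global level model structure as a cofibrantly generated model structure, then obtain the global model structure as a left Bousfield localization via Smith's theorem. I will treat the two parts separately, since the first is a matter of verifying Kan's recognition criterion while the second is a matter of exhibiting a good set of maps to localize at.

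For the global level model structure, I would take as generating cofibrations the set
\begin{equation*}
\{F_A\bigl((\Sigma_A/\Gamma_{H,\phi})_+\smashp(\del\Delta^n\hookrightarrow\Delta^n)_+\bigr)\},
\end{equation*}
where $F_A$ denotes the left adjoint to evaluation at the finite set $A$, $H$ runs through finite groups with chosen faithful action on $A$, and the subgroups $\Gamma_{H,\phi}\subset\Sigma_A$ are graph subgroups for homomorphisms to the ambient symmetric group (with generating trivial cofibrations obtained by replacing $\del\Delta^n\hookrightarrow\Delta^n$ by horns). Smallness of the sources is automatic. The nontrivial content is (i) that the characterization of fibrations in the statement matches right-lifting against the generating trivial cofibrations, and (ii) that the generating trivial cofibrations are weak equivalences. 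Both reduce to a combinatorial analysis of the fixed points $F_A(X)(B)^K$ for a subgroup $K\subset\Sigma_B$: by a standard decomposition, these break up as a wedge indexed by orbits of $K$ on $\Inj(A,B)$, and on each summand one identifies a stabilizer under which $X$ must be analyzed. Once one verifies that the stabilizers arising this way are again of the required `graph' form (this is where faithfulness of the $H$-action on $A$ enters), the usual equivariant model-theoretic arguments for simplicial sets do the rest.

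For the Bousfield localization, the level structure is combinatorial by the above and left proper (level weak equivalences are preserved under pushout along level cofibrations, reducing to the corresponding statement for equivariant simplicial sets via fixed-point analysis), so Smith's theorem applies once a set $S$ of maps is identified whose $S$-local fibrant objects are precisely the level fibrant global $\Omega$-spectra. The natural choice is to take $S$ to consist of cofibrant representatives of the `adjoint-structure' maps
\begin{equation*}
F_{A\amalg B}(S^B\smashp(\Sigma_{A\amalg B}/\Gamma_{H,\phi})_+)\longrightarrow F_A((\Sigma_A/\Gamma_{H,\phi})_+)
\end{equation*}
for $H$, faithful $A$, arbitrary finite $B$, and $\phi$ as in the level structure. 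By adjunction, a level fibrant $X$ is $S$-local if and only if every derived adjoint structure map $X(A)^H\to(\cat{R}\Omega^B X(A\amalg B))^H$ is a weak equivalence, which is exactly the global $\Omega$-spectrum condition.

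The main obstacle I anticipate is the bookkeeping in step (i) of the level structure: the generating trivial cofibrations involve a mixture of universal-subgroup-indexed and faithful-$H$-set-indexed data, and verifying that transfinite compositions of pushouts of these maps remain weak equivalences in the sense of the theorem requires simultaneously controlling all fixed-point functors at once. Once this pointset-level verification is done, the Bousfield localization step is routine, and the identification of fibrant objects is a formal consequence of the choice of $S$.
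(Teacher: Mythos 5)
Your approach is essentially the same as Hausmann's (which is all the paper does: cite \cite[Proposition~2.5 and Theorem~2.17]{hausmann-global}): construct the level structure via Kan's recognition criterion using free/semifree spectra on orbit cells, then obtain the global structure as a left Bousfield localization at a set of $\lambda$-type maps whose local objects are exactly the global $\Omega$-spectra. Two small points of precision are worth flagging.

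First, on the generating cofibrations: if $F_A$ is literally the left adjoint to evaluation valued in \emph{unequivariant} pointed simplicial sets, then $F_A((\Sigma_A/\Gamma)_+\smashp K)$ discards the $\Sigma_A$-equivariance and does not corepresent $\Gamma$-fixed points; you need the semifree functor $G_A$ (left adjoint to evaluation valued in pointed $\Sigma_A$-simplicial sets) applied to $(\Sigma_A/\Gamma)_+\smashp K$, so that $\Hom(G_A((\Sigma_A/\Gamma)_+\smashp K),X)\cong\maps(K,X(A)^\Gamma)$.

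Second, the proposed localizing maps $F_{A\amalg B}\bigl(S^B\smashp(\Sigma_{A\amalg B}/\Gamma')_+\bigr)\to F_A\bigl((\Sigma_A/\Gamma)_+\bigr)$ do not quite type-check: the input to the (semi)free functor at $A\amalg B$ must be a $\Sigma_{A\amalg B}$-space, and the ambient symmetric group $\Sigma_{A\amalg B}$ does not preserve the summand $B$, so it does not act on $S^B$. The correct source is the induced space $\Sigma_{A\amalg B}^+\smashp_{\Gamma'}S^B$, where $\Gamma'\subset\Sigma_{A\amalg B}$ is the image of $H$ acting on $A\amalg B$ and $\Gamma'$ acts on $S^B$ via its restriction to $B$; then $\maps\bigl(G_{A\amalg B}(\Sigma_{A\amalg B}^+\smashp_{\Gamma'}S^B),X\bigr)\cong\maps_{\Gamma'}(S^B,X(A\amalg B))\cong(\Omega^B X(A\amalg B))^H$ as required. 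With this correction the identification of the $S$-local fibrant objects with level fibrant global $\Omega$-spectra goes through exactly as you describe.
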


By \cite[Proposition~4.6-(i)]{hausmann-global} the above model structure is stable, and hence so is the $\infty$-category $\Sp_\textup{gl}\coloneqq\cat{Spectra}_\textup{global}^\infty$.

\begin{constr}
    Let $H$ be a finite group and let $\mathcal U_H$ be {complete $H$-set universe} (see Definition~\ref{def:universe}). For any $k\ge0$ and any symmetric spectrum $X$ we define the \emph{na\"ive} homotopy group
    \[
        \tilde\pi_k^H(X)\coloneqq\colim\limits_{A\subset\mathcal U_H\atop\textup{finite $H$-set}} [|S^A\smashp S^k|,|X(A)|]_*
    \]
    where $|{\cdot}|$ denotes the geometric realization, $[\,{,}\,]_*$ denotes based homotopy classes, and the transition maps are induced by the structure maps of $X$ in the evident way. Similarly, we define
    \[
        \tilde\pi_k^H(X)\coloneqq\colim\limits_{A\subset\mathcal U_H\atop\textup{finite $H$-set}} [|S^A|,|X(\{1,2,\dots,-k\}\amalg A)|]_*
    \]
    for $k<0$.
\end{constr}

The central downside of the symmetric spectrum model is that the above na\"ively defined homotopy groups are not invariant under global weak equivalences. We therefore define the \emph{true homotopy groups} $\pi_*^H(X)$ as the na\"ive homotopy groups of a fibrant replacement in the global model structure. As a consequence of \cite[Lemma~3.1.33]{g-global} the true homotopy groups are indepdent of the choice of fibrant replacement and invariant under global weak equivalences.

\begin{defi}
    A global spectrum $X$ is called \emph{connective} if $\pi_k^H(X)=0$ for every $k<0$ and every finite group $H$.
\end{defi}

\begin{prop}\label{prop:t-structure}
    There exists a (unique) right-complete t-structure on $\Sp_\textup{gl}$ with connective part $\Sp^{\ge0}_\textup{gl}$ given by the connective global spectra.
    \begin{proof}
        See \cite[Theorem~7.1.12]{CLL_Global}.\footnote{Note that despite appearing in \cite{CLL_Global}, this has of course nothing to do with parametrized higher category theory, and instead follows from some general nonsense about compactly generated stable $\infty$-categories.}
    \end{proof}
\end{prop}

\subsection{The comparison} We now turn to the proof of our comparison result. The key idea is the following simple observation:

\begin{lemma}\label{lemma:Mackey-stab}
    The functor $\cat{R}\Omega^\infty\colon\Fun^\times(\Agl,\Sp)\to\Fun^\times(\Agl,\mathscr S)$ exhibits the source as the stabilization of the target, i.e.~for every stable $\infty$-category $\Cc$ the induced map
    \[
        \Fun^\textup{lex}\big(\Cc, \Fun^\times(\Agl,\Sp)\big)\to \Fun^\textup{lex}\big(\Cc, \Fun^\times(\Agl,\mathscr S)\big).
    \]
    is an equivalence.
    \begin{proof}
        For any $\infty$-category $\Dd$ with finite limits, a concrete model of the stabilization is the functor $\Exc_*(\mathscr S^\text{fin}_*, \Dd)\to\Dd$ given by evaluation at $S^0$, where the source denotes the category of reduced excisive functors $\mathscr S^\text{fin}_*\to\Dd$ from the $\infty$-category of finite pointed spaces, see \cite[Definition~1.4.2.8 and Proposition~1.4.2.22]{higher-algebra}. We now observe that the canonical equivalence \[\Exc_*(\mathscr S^\text{fin}_*, \Fun(\Agl,\Dd))\simeq\Fun(\Agl, \Exc_*(\mathscr S^\text{fin},\Dd))\] from \cite[Remark~1.4.2.9]{higher-algebra} restricts to an equivalence \[\Exc_*(\mathscr S^\text{fin}_*, \Fun^\times(\Agl,\Dd))\simeq\Fun^\times(\Agl, \Exc_*(\mathscr S^\text{fin},\Dd))\] because limits in functor categories are pointwise; in other words, $\Fun^\times(\Agl,\text{--})$ commutes with stabilization. The claim now follows by applying this observation to the stabilization $\cat{R}\Omega^\infty\colon\Sp\to\mathscr S$.
    \end{proof}
\end{lemma}

To conclude our main result, it will therefore suffice to similarly exhibit $\Sp_\textup{gl}$ as the stabilization of $(\cat{$\bm\Gamma$-$\bm{E\mathcal M}$-SSet}_*^\textup{special})^\infty$. This will rely on the following global version of Segal's classical \emph{delooping theorem}:

\begin{defi}\label{defi:very-special}
    A special global $\Gamma$-space $X$ is called \emph{very special} if for every universal $H\subset\mathcal M$ the abelian monoid structure on $\pi_0(X(1^+)^H)$ defined via
    \[
        \pi_0(X(1^+)^H)^{\times 2}\cong \pi_0(X(2^+)^H)\xrightarrow{\; \pi_0(X(\mu)^H)\;} \pi_0(X(1^+)^H)
    \]
    is a group structure; here $\mu\colon 2^+\to 1^+$ denotes the map with $\mu(1)=\mu(2)=1$.
\end{defi}

\begin{thm}\label{thm:deloop}
    There exists an (explicit) functor
    \begin{equation}\label{eq:deloop}
        \textup{deloop}\colon (\cat{$\bm\Gamma$-$\bm{E\mathcal M}$-SSet}_*^\textup{special})^\infty\to\Sp_\textup{gl}
    \end{equation}
    that restricts to an equivalence $(\cat{$\bm\Gamma$-$\bm{E\mathcal M}$-SSet}_*^\textup{very special})^\infty\simeq\Sp_\textup{gl}^{\ge0}$ between the full subcategories of very special global $\Gamma$-spaces and of connective global spectra.
\end{thm}
\begin{proof}
    This is the special case $G=1$ of \cite[Theorem~3.4.21]{g-global}, translated through the equivalence from Corollary~2.2.53 of \emph{op.\ cit.}
\end{proof}

\begin{prop}\label{prop:Gamma-stab}
    The $\infty$-category $\Sp_\textup{gl}$ of global spectra is the stabilization of $(\cat{$\bm\Gamma$-$\bm{E\mathcal M}$-SSet}_*^\textup{special})^\infty$.
    \begin{proof}
        It suffices to construct an equivalence
        \[
            \Fun^\textup{lex}\big(\Dd, (\cat{$\bm\Gamma$-$\bm{E\mathcal M}$-SSet}_*^\textup{special})^\infty\big)\simeq\Fun^\textup{lex}\big(\Dd,\Sp_\textup{gl}\big)
        \]
        natural in stable $\infty$-categories $\Dd$.

        For this we first observe that any left exact functor $F\colon\Dd\to(\cat{$\bm\Gamma$-$\bm{E\mathcal M}$-SSet}_*^\textup{special})^\infty$ actually factors through the full subcategory of \emph{very special} objects: if $X\in\Dd$ is arbitrary, then $F(X)\simeq F(\Omega\Sigma X)\simeq\textbf{R}\Omega F(\Sigma X)$, and by the Eckmann-Hilton argument the monoid structure on $\pi_0(\textbf{R}\Omega F(\Sigma X)(1^+)^H)$ from Definition~\ref{defi:very-special} agrees with the group structure coming from the identification with $\pi_1(F(\Sigma X)(1^+)^H)$.

        Combining this with Theorem~\ref{thm:deloop}, we get a natural equivalence
        \[
             \Fun^\textup{lex}\big(\Dd, (\cat{$\bm\Gamma$-$\bm{E\mathcal M}$-SSet}_*^\textup{special})^\infty\big)\simeq\Fun^\textup{lex}\big(\Dd,\Sp_{\textup{gl}}^{\ge0}\big),
        \]
        so it remains to exhibit $\Sp_\text{gl}$ as the stabilization of $\Sp_\text{gl}^{\ge0}$. This follows formally from the latter being the connective part of a right complete t-structure (Proposition~\ref{prop:t-structure}), see \cite[introduction to Appendix C]{lurie-SAG} or \cite[proof of Lemma~7.2.9]{CLL_Global}.
    \end{proof}
\end{prop}

\begin{rk}
    A slightly more careful argument shows that the right adjoint of the delooping functor $(\ref{eq:deloop})$ exhibits $\Sp_\textup{gl}$ as the stabilization, see in particular \cite[proof of Theorems 3.4.21 and 3.4.32]{g-global}.
\end{rk}

We now easily get:

\begin{thm}\label{thm:main-thm}
    There exists an equivalence $\Sp_{\textup{gl}}\simeq\Fun^\times(\Agl,\Sp)$.
    \begin{proof}
        By Theorem~\ref{thm:ucom} we have an equivalence
        \[
            (\cat{$\bm\Gamma$-$\bm{E\mathcal M}$-SSet}_*^\textup{special})^\infty\simeq\Fun^\times(\Agl,\mathscr S)
        \]
        and hence in particular an equivalence of their stabilizations. The claim now follows from Proposition~\ref{prop:Gamma-stab} together with Lemma~\ref{lemma:Mackey-stab}.
    \end{proof}
\end{thm}

\appendix
\section{Unfurling for 2-categories}
This appendix is devoted to the proof of Proposition~\ref{prop:unfurl}, which we repeat for convenience:

\begin{prop*}
Let $\mathscr I$ be a strict $(2,1)$-category, and let $\mathscr I^\dagger\subset\mathscr I$ be a $2$-subcategory such that $(\nerve_\Delta(\mathscr I),\nerve_\Delta(\mathscr I),\nerve_\Delta(\mathscr I^\dagger))$ is an adequate triple. Let $\mathscr C$ be any strict $2$-category and write $\mathscr C_{(2,1)}$ for its underlying $(2,1)$-category, i.e.~the $2$-category obtained by throwing away all non-invertible $2$-cells. Moreover, let $\phi\colon\mathscr I\to\mathscr C$ be a strict $2$-functor such that for every $i\in\mathscr I^\dagger$ the functor $i_!\coloneqq\phi(i)$ admits a right adjoint $i^*$ and such that for every homotopy pullback diagram in $\nerve_\Delta(\mathscr I)$ as on the left in
\begin{equation*}
\begin{tikzcd}
A\arrow[r,"g"]\arrow[d,"j"',tail]&\twocell[dl, "\scriptstyle\sigma"{yshift=-8pt},yshift=3.5pt] B\arrow[d,"i",tail]\\
C\arrow[r,"f"'] & D
\end{tikzcd}\hskip1in
\begin{tikzcd}
\phi(A)\arrow[r,"g_!"]\twocell[dr, "\scriptstyle\sigma_\lozenge"{yshift=-8pt},yshift=3.5pt] &\phi(B)\\
\phi(C)\arrow[u,"j^*"]\arrow[r,"f_!"'] & \phi(D)\arrow[u,"i^*"']
\end{tikzcd}
\end{equation*}
with vertical arrows belonging to $\mathscr I^\dagger$, the canonical mate $\sigma_\lozenge$ of $\sigma_!$ depicted on the right is an isomorphism.

Then there is a unique functor $\Phi\colon A^{\textup{eff}}(\nerve_\Delta(\mathscr I),\nerve_\Delta(\mathscr I),\nerve_\Delta(\mathscr I^\dagger))\to\nerve_\Delta(\mathscr C_{(2,1)})$ that sends a $2$-simplex of the form
\begin{equation*}
\begin{tikzcd}[column sep=1.35em,row sep=1.875em]
&& C\arrow[ddll, bend right=40pt, shift right=5pt, "j_{02}"',""{name=j02}]\arrow[ddrr, bend left=40pt, shift left=5pt, "g_{02}", ""'{name=g02}]\arrow[dl, "j_{01}"',tail]\arrow[dr, "g_{01}"]\\
& \twocell[from=j02, "\scriptstyle\rho"{yshift=-5pt,xshift=-5pt}]B\arrow[dl, "j_{12}"',tail]\arrow[dr, "f"']\twocell[rr,"\scriptstyle\sigma"{yshift=6pt}] && D\arrow[dl, "i",tail]\arrow[dr, "g_{12}"]\twocell[from=g02,"\scriptstyle\tau"{yshift=-5pt,xshift=5pt}]\\
A && F && E
\end{tikzcd}
\end{equation*}
to the $2$-simplex
\begin{equation*}
\begin{tikzcd}[column sep=small, cramped]
& \phi(F)\arrow[dr, bend left=10pt, "g_{12!}i^*"]\\
\phi(A)\arrow[rr,"g_{02!}j_{02}^*"'{name=bot}]\arrow[ur, bend left=10pt, "f_!j_{12}^*"]\twocell[from=bot,ur] && \phi(E)
\end{tikzcd}
\quad\text{given as the pasting}\quad
\begin{tikzcd}[column sep=1.35em,row sep=1.875em]
&& \twocell[dd,"\scriptstyle\sigma_\lozenge"{xshift=10pt}]C\arrow[from=ddll, bend left=40pt, shift left=5pt, "j_{02}^*",""{name=j02}]\arrow[ddrr, bend left=40pt, shift left=5pt, "g_{02!}", ""'{name=g02}]\arrow[from=dl, "j_{01}^*"]\arrow[dr, "g_{01!}"]\\
& \twocell[from=j02, "\scriptstyle(\mskip-.3\thinmuskip\rho^*\mskip-.3\thinmuskip)^{\!{-}\!1}"{yshift=-7pt,xshift=-1pt},xshift=2pt,yshift=-2pt]B\arrow[from=dl, "j_{12}^*"]\arrow[dr, "f_!"'] && D\arrow[from=dl, "i^*"']\arrow[dr, "g_{12!}"]\twocell[from=g02,"\scriptstyle\tau_!"{yshift=-5pt,xshift=5pt}]\\
A && F && E
\end{tikzcd}
\end{equation*}
where $\rho^*$ is the total mate of $\rho_!$ and $\sigma_\lozenge$ is defined as above.
\begin{proof}
    It is clear that the above defines a map $\widehat\Phi$ from the $2$-skeleton of $A\coloneqq A^{\text{eff}}(\nerve_\Delta(\mathscr I),\nerve_\Delta(\mathscr I),\nerve_\Delta(\mathscr I^\dagger))$ to $\nerve_\Delta(\mathscr C_{(2,1)})$. As the latter is strictly $2$-coskeletal, it is then enough to show that this can be extended over $3$-simplices, i.e.~given any $3$-simplex $\sigma$ of $A$, the map $\widehat\Phi(\del\sigma)\colon\del\Delta^3\to\nerve_\Delta(\mathscr C_{(2,1)})$ extends to $\Delta^3$.

    To prove this, we begin by writing out some of the information encoded in such a $3$-simplex $\sigma$. Namely, this in particular consists of the following data:
    \begin{enumerate}
    \item A diagram
    \begin{equation*}
    \begin{tikzcd}[column sep=1.35em,row sep=1.875em]
    &&& D\arrow[dl, "j_{01}"']\arrow[dr,"g_{01}"]\\
    && C\arrow[dl, "j_{12}"']\arrow[dr, "f_{01}"{description}]\twocell[rr,"\scriptstyle\sigma"{yshift=6pt}] && E\arrow[dl, "i_{01}"{description}]\arrow[dr, "g_{12}"]\\
    & B\arrow[dl, "j_{23}"']\arrow[dr,"e"']\twocell[rr,"\scriptstyle\tau"{yshift=6pt}] && H\arrow[dl, "i_{12}"{description}]\arrow[dr,"f_{12}"{description}]\twocell[rr,"\scriptstyle\rho"{yshift=6pt}]&& F\arrow[dl,"h"]\arrow[dr, "g_{23}"]\\
    A && I && J && G
    \end{tikzcd}
    \end{equation*}
    in $\mathscr I$, where the squares are homotopy pullback squares and we have as before already pasted the natural transformations filling them.
    \item Maps $g_{02}, g_{03},g_{13}$ together with $2$-cells $\gamma_{012}$, $\gamma_{013}$, $\gamma_{023}$, $\gamma_{123}$ such that the two pastings
    \begin{equation}\label{diag:gamma-pastings}
    \begin{tikzcd}[row sep=4.5em,column sep=4.5em]
    E\arrow[r,"g_{12}"]\arrow[dr,"g_{13}"{description,name=x}]& F\arrow[d,"g_{23}"]\twocell[from=x,"\scriptstyle\gamma_{123}"{xshift=5pt,yshift=-9pt}]\\
    D\arrow[u,"g_{01}"]\arrow[r,"g_{03}"'{name=a}]\twocell[from=a,to=x,"\scriptstyle\gamma_{013}"{xshift=-11pt}]& G
    \end{tikzcd}\qquad\text{and}\qquad
    \begin{tikzcd}[row sep=4.5em,column sep=4.5em]
    E\arrow[r,"g_{12}"]& F\arrow[d,"g_{23}"]\\
    D\arrow[u,"g_{01}"]\arrow[ur,"g_{02}"{description,name=x}]\arrow[r,"g_{03}"'{name=a}]\twocell[from=a,to=x,"\scriptstyle\gamma_{023}"{xshift=12pt}]\twocell[u,from=x,"\scriptstyle\gamma_{012}"{xshift=-5pt,yshift=-9pt}]& G
    \end{tikzcd}
    \end{equation}
    agree, as well as maps $j_{02}, j_{03},j_{13}$ together with $2$-cells $\kappa_{012}$, $\kappa_{013}$, $\kappa_{023}$, $\kappa_{123}$ between the analogous composites of the $j_{ab}$'s, satisfying an analogous coherence condition.
    \item Maps $f_{02}$ and $i_{02}$ together with $2$-cells $\digamma\colon f_{02}\Rightarrow f_{12}f_{01}$ and $\iota\colon i_{02}\Rightarrow i_{12}i_{01}$.
    \end{enumerate}
    By definition of $\widehat\Phi$ and the construction of the Duskin nerve, the filling of $\widehat\Phi(\del\sigma)$ then amounts to saying that the pastings
    \begin{equation}\label{diag:Phi-hat-first}
    \begin{tikzcd}[row sep=large]
    \phi(I)\arrow[rr, "i_{12}^*"]\arrow[drr,"i_{02}^*"{description,name=c}] && \twocell[from=c,"\scriptstyle\iota^{-*}"{xshift=4pt,yshift=-5pt}]\phi(H)\arrow[d,"i_{01}^*"{description}]\arrow[rr,"f_{12!}"]&& \phi(J)\arrow[d,"h^*"]\\
    \phi(B)\arrow[u,"e_!"]\arrow[rrd,"j_{02}^*"{description,name=a}] && \phi(E)\twocell[urr, "\scriptstyle\rho_\lozenge"{xshift=5pt,yshift=-5pt}]\arrow[rr,"g_{12!}"{description}]\arrow[drr,"g_{13!}"{description,name=e}] && \phi(F)\arrow[d,"g_{23!}"]\twocell[from=e,"\scriptstyle\gamma_!"{xshift=5pt,yshift=-5pt}]\\
    \phi(A)\arrow[rr,"j_{03}^*"'{name=b}]\twocell[from=b,to=a,"\scriptstyle\kappa^{-*}"{xshift=-11pt}]\arrow[u,"j_{23}^*"] && \phi(D)\arrow[rr,"g_{03!}"'{name=f}]\twocell[to=e,from=f,"\scriptstyle\gamma_!"{xshift=-8pt}]\twocell[uull,"\scriptstyle(\sigma\odot\tau)_\lozenge"{yshift=-8pt},yshift=4pt]\arrow[u,"g_{01!}"{description}] && \phi(G)
    \end{tikzcd}
    \end{equation}
    and
    \begin{equation}\label{diag:Phi-hat-second}
    \begin{tikzcd}[row sep=large]
    \phi(I)\arrow[rr, "i_{12}^*"] && \phi(H)\arrow[rr,"f_{12!}"]&& \phi(J)\arrow[d,"h^*"]\\
    \phi(B)\arrow[u,"e_!"]\arrow[rr,"j_{12}^*"{description}] && \phi(C)\twocell[llu,"\scriptstyle\tau_\lozenge"{xshift=-5pt,yshift=-5pt}]\arrow[urr,"f_{02!}"{description,name=x}]\twocell[from=x,u,"\scriptstyle\digamma_!"{xshift=5pt,yshift=4pt}]\arrow[u,"f_{01!}"{description}]\arrow[d,"j_{01}^*"{description}] && \phi(F)\arrow[d,"g_{23!}"]\\
    \phi(A)\arrow[urr,"j_{13}^*"{description,name=b}]\twocell[from=b,u,"\scriptstyle\kappa^{-*}"{xshift=-5pt,yshift=-5pt}]\arrow[rr,"j_{03}^*"'{name=c}]\twocell[from=c,to=b,"\scriptstyle\kappa^{-*}"{xshift=11pt}]\arrow[u,"j_{23}^*"] && \phi(D)\twocell[uurr,"\scriptstyle(\rho\odot\sigma)_\lozenge"{yshift=-8.5pt},yshift=4pt]\arrow[rr,"g_{03!}"'{name=a}]\arrow[urr,"g_{02!}"{description,name=y}]\twocell[from=a,to=y,"\scriptstyle\gamma_!"{xshift=8pt}] && \phi(G)
    \end{tikzcd}
    \end{equation}
    agree, where $\sigma\odot\tau$ denotes the pasting
    \begin{equation*}
    \begin{tikzcd}[row sep=large,column sep=large]
    D\arrow[d,"g_{01}"']\arrow[r,"j_{01}"{description}]\arrow[rr, bend left=25pt,yshift=7.5pt,"j_{02}"{name=a}] & C\twocell[dl,"\scriptstyle\sigma"{xshift=5pt,yshift=-5pt}]\arrow[d,"f_{01}"{description}]\twocell[from=a,"\scriptstyle\kappa"{xshift=8pt},yshift=-2pt]\arrow[r,"j_{12}"{description}] & \twocell[dl,"\scriptstyle\tau"{xshift=5pt,yshift=-5pt}]B\arrow[d,"e"]\\
    E\arrow[r,"i_{01}"{description}]\arrow[rr,bend right=25pt,yshift=-7.5pt, "i_{02}"'{name=b}] & H\twocell[to=b,"\scriptstyle\iota^{-1}"{xshift=11pt,yshift=1pt}]\arrow[r, "i_{12}"{description}] & I\rlap{,}
    \end{tikzcd}
    \end{equation*}
    $\rho\odot\sigma$ is defined analogously, and we have for readability omitted the indices of the transformations $\gamma$ and $\kappa$, and abbreviated $\kappa^{-*}=(\kappa^*)^{-1}$ etc.

    To prove the equality of (\ref{diag:Phi-hat-first}) and (\ref{diag:Phi-hat-second}), we first observe that applying $2$-functoriality to (\ref{diag:gamma-pastings}) shows that the pastings
    \begin{equation}\label{diag:gamma-shriek-pastings}
    \begin{tikzcd}[row sep=4.5em,column sep=4.5em]
    \phi(E)\arrow[r,"g_{12!}"]\arrow[dr,"g_{13!}"{description,name=x}]& \phi(F)\arrow[d,"g_{23!}"]\twocell[from=x,"\scriptstyle\gamma_{!}"{xshift=5pt,yshift=-9pt}]\\
    \phi(D)\arrow[u,"g_{01!}"]\arrow[r,"g_{03!}"'{name=a}]\twocell[from=a,to=x,"\scriptstyle\gamma_{!}"{xshift=-8pt}]& \phi(G)
    \end{tikzcd}\qquad\text{and}\qquad
    \begin{tikzcd}[row sep=4.5em,column sep=4.5em]
    \phi(E)\arrow[r,"g_{12!}"]& \phi(F)\arrow[d,"g_{23!}"]\\
    \phi(D)\arrow[u,"g_{01!}"]\arrow[ur,"g_{02!}"{description,name=x}]\arrow[r,"g_{03!}"'{name=a}]\twocell[from=a,to=x,"\scriptstyle\gamma_{!}"{xshift=8pt}]\twocell[u,from=x,"\scriptstyle\gamma_{!}"{xshift=-5pt,yshift=-9pt}]& \phi(G)
    \end{tikzcd}
    \end{equation}
    agree. Arguing likewise for $\kappa$ and then appealing to the compatibility of mates with pastings moreover shows that the two pastings
    \begin{equation}\label{diag:kappa-star-pastings}
    \null\quad\hskip-2pt
    \begin{tikzcd}[row sep=4.5em,column sep=4.5em]
    \phi(B)\arrow[r,"j_{12}^*"]\arrow[dr,"j_{13}^*"{description,name=x}]& \phi(C)\arrow[d,"j_{23}^*"]\twocell[from=x,"\scriptstyle\kappa^{-*}"{xshift=5pt,yshift=-9pt}]\\
    \phi(A)\arrow[u,"j_{01}^*"]\arrow[r,"j_{03}^*"'{name=a}]\twocell[from=a,to=x,"\scriptstyle\kappa^{-*}"{xshift=-10pt}]& \phi(D)
    \end{tikzcd}\qquad\text{and}\qquad
    \begin{tikzcd}[row sep=4.5em,column sep=4.5em]
    \phi(B)\arrow[r,"j_{12}^*"]& \phi(C)\arrow[d,"j_{23}^*"]\\
    \phi(A)\arrow[u,"j_{01}^*"]\arrow[ur,"j_{02}^*"{description,name=x}]\arrow[r,"g_{03!}"'{name=a}]\twocell[from=a,to=x,"\scriptstyle\kappa^{-*}"{xshift=10pt}]\twocell[u,from=x,"\scriptstyle\kappa^{-*}"{xshift=-5pt,yshift=-9pt}]& \phi(D)
    \end{tikzcd}
    \end{equation}
    agree. Finally, again using $2$-functoriality and the compatibility of mates with pastings shows that $(\sigma\odot\tau)_\lozenge$ agrees with the pasting
    \begin{equation}\label{diag:lozenge-pasting}
    \begin{tikzcd}[column sep=large,row sep=large]
    \phi(I)\arrow[r,"i_{12}^*"{description}]\arrow[rr, bend left=25pt,yshift=7.5pt,"i_{02}^*"{name=a}] & \phi(H)\twocell[to=a,"\scriptstyle\iota^*"{xshift=8pt},yshift=-2pt]\arrow[r,"i_{01}^*"{description}] & \phi(E)\\
    \phi(B)\arrow[u,"e_!"]\arrow[r,"j_{12}^*"{description}]\arrow[rr,bend right=25pt,yshift=-7.5pt,"j_{02}^*"'{name=b}] & \phi(C)\twocell[ul,"\scriptstyle\tau_\lozenge"{xshift=5pt,yshift=5pt}]\arrow[u,"f_{01!}"{description}]\arrow[r,"j_{01}^*"{description}]\twocell[from=b,"\scriptstyle\kappa^{-*}"{xshift=11pt},yshift=2pt] & \phi(D)\arrow[u,"g_{01!}"']\twocell[ul,"\scriptstyle\sigma_\lozenge"{xshift=5pt,yshift=5pt}]
    \end{tikzcd}
    \end{equation}
    (where we have additionally rotated the diagram by $\pi$ radians for the sake of the argument below), and analogously for $(\rho\odot\sigma)_\lozenge$.

    Now plugging (\ref{diag:lozenge-pasting}) and (\ref{diag:kappa-star-pastings}) into the left hand portion of (\ref{diag:Phi-hat-first}) shows that the whole diagram (\ref{diag:Phi-hat-first}) agrees with the pasting
    \begin{equation}\label{diag:Phi-hat-first-massaged}
    \begin{tikzcd}[row sep=large]
    \phi(I)\arrow[rr, "i_{12}^*"] && \phi(H)\arrow[d,"i_{01}^*"{description}]\arrow[rr,"f_{12!}"]&& \phi(J)\arrow[d,"h^*"]\\
    \phi(B)\arrow[u,"e_!"]\arrow[r,"j_{12}^*"{description,name=a}] & \phi(C)\twocell[ul,"\scriptstyle\tau_\lozenge"{xshift=8pt}]\arrow[ur,"f_{01!}"{description,name=s2}]\arrow[dr,"j_{01}^*"{description,name=s1}]\twocell[from=s1,to=s2,"\scriptstyle\sigma_\lozenge"{xshift=8pt,yshift=-1.5pt},xshift=-4pt]
    & \phi(E)\twocell[urr, "\scriptstyle\rho_\lozenge"{xshift=5pt,yshift=-5pt}]\arrow[rr,"g_{12!}"{description}]\arrow[drr,"g_{13!}"{description,name=e}] && \phi(F)\arrow[d,"g_{23!}"]\twocell[from=e,"\scriptstyle\gamma_!"{xshift=5pt,yshift=-5pt}]\\
    \phi(A)\arrow[ur,"j_{13}^*"{description,name=x}]\twocell[from=x,u,"\scriptstyle\kappa^{-*}"{xshift=-8pt,yshift=-6pt},xshift=4pt,yshift=4pt]\arrow[rr,"j_{03}^*"'{name=b}]\twocell[from=b,ur,"\scriptstyle\kappa^{-*}"{xshift=-11pt}]\arrow[u,"j_{23}^*"] && \phi(D)\arrow[rr,"g_{03!}"'{name=f}]\twocell[to=e,from=f,"\scriptstyle\gamma_!"{xshift=-8pt}]\arrow[u,"g_{01!}"{description}] && \phi(G)\rlap{.}
    \end{tikzcd}
    \end{equation}
    Similarly using (\ref{diag:gamma-shriek-pastings}) together with the analogue of (\ref{diag:lozenge-pasting}) for $(\rho\odot\sigma)_\lozenge$ left implicit above, we see that (\ref{diag:Phi-hat-second}) agrees with the pasting
    \begin{equation*}
    \begin{tikzcd}[row sep=large]
    \phi(I)\arrow[rr, "i_{12}^*"] && \phi(H)\arrow[dr,"i_{01}^*"{description,name=y}]\arrow[rr,"f_{12!}"]&& \phi(J)\arrow[d,"h^*"]\\
    \phi(B)\arrow[u,"e_!"]\arrow[rr,"j_{12}^*"{description}] && \phi(C)\twocell[llu,"\scriptstyle\tau_\lozenge"{xshift=-5pt,yshift=-5pt}]\arrow[u,"f_{01!}"{description}]\arrow[d,"j_{01}^*"{description}] &\phi(E)\arrow[r,"g_{12!}"{description}]\twocell[ur,"\scriptstyle\rho_\lozenge"{xshift=-6pt,yshift=5pt}]\arrow[dr,"g_{13!}"{description,name=z}]\twocell[from=z,r,"\scriptstyle\gamma_!"{xshift=6pt,yshift=-6pt},xshift=-3pt]& \phi(F)\arrow[d,"g_{23!}"]\\
    \phi(A)\arrow[urr,"j_{13}^*"{description,name=b}]\twocell[from=b,u,"\scriptstyle\kappa^{-*}"{xshift=-5pt,yshift=-5pt}]\arrow[rr,"j_{03}^*"'{name=c}]\twocell[from=c,to=b,"\scriptstyle\kappa^{-*}"{xshift=11pt}]\arrow[u,"j_{23}^*"] && \phi(D)\arrow[ur,"g_{01!}"{description,name=x}]\twocell[from=x,to=y,"\scriptstyle\sigma_\lozenge"{xshift=8pt,yshift=-1.5pt},xshift=-4pt]\arrow[rr,"g_{03!}"'{name=a}]\twocell[from=a,ur,"\scriptstyle\gamma_!"{xshift=8pt}] && \phi(G)\rlap{.}
    \end{tikzcd}
    \end{equation*}
    However, this is in turn exactly the same as (\ref{diag:Phi-hat-first-massaged}) except for the way we have embedded it into the plane, which completes the proof of the proposition.
\end{proof}
\end{prop*}

\frenchspacing
\bibliographystyle{amsalpha}
\bibliography{literature.bib}

\newcommand{\etalchar}[1]{$^{#1}$}
\providecommand{\bysame}{\leavevmode\hbox to3em{\hrulefill}\thinspace}
\providecommand{\MR}{\relax\ifhmode\unskip\space\fi MR }
\providecommand{\MRhref}[2]{%
  \href{http://www.ams.org/mathscinet-getitem?mr=#1}{#2}
}
\providecommand{\href}[2]{#2}
\begin{thebibliography}{{Len}25b}

\bibitem[{Bar}17]{barwick-mackey}
Clark {Barwick}, \emph{{Spectral Mackey functors and equivariant algebraic \(K\)-theory. I.}}, {Adv. Math.} \textbf{304} (2017), 646--727.

\bibitem[BD20]{balmer-mackey}
Paul {Balmer} and Ivo {Dell'Ambrogio}, \emph{{Mackey 2-Functors and Mackey 2-Motives}}, {EMS Monogr. Math.}, Z\"urich: European Mathematical Society (EMS), 2020.

\bibitem[BDG{\etalchar{+}}16]{parametrized-intro}
Clark {Barwick}, Emanuele {Dotto}, Saul {Glasman}, Denis {Nardin}, and Jay {Shah}, \emph{{Parametrized higher category theory and higher algebra: A general introduction}}, {preprint, \texttt{arXiv:1608.03654}}, 2016.

\bibitem[{Ber}18]{berman-thesis}
John~D. {Berman}, \emph{{Categorified algebra and equivariant homotopy theory}}, {preprint, \texttt{arXiv:1805.08745}}, 2018.

\bibitem[Bro73]{brown-factorization}
Kenneth~S. Brown, \emph{{Abstract homotopy theory and generalized sheaf cohomology}}, Trans. Amer. Math. Soc. \textbf{186} (1973), 419--458.

\bibitem[CLL23]{CLL_Global}
Bastiaan Cnossen, Tobias Lenz, and Sil Linskens, \emph{Parametrized stability and the universal property of global spectra}, preprint, \texttt{{arXiv}:2301.08240}, 2023.

\bibitem[CLL24]{CLL_Spans}
\bysame, \emph{Parametrized higher semiadditivity and the universality of spans}, {preprint, \texttt{arXiv:2403.07676}}, 2024.

\bibitem[CMNN24]{clausen-mathew-naumann-noel}
Dustin {Clausen}, Akhil {Mathew}, Niko {Naumann}, and Justin {Noel}, \emph{{Descent and vanishing in chromatic algebraic $K$-theory via group actions}}, Ann. Sci. {\'E}c. Norm. Sup{\'e}r. (4) \textbf{57} (2024), no.~4, 1135--1190.

\bibitem[DH21]{della-huglo}
Ivo {Dell'Ambrogio} and James {Huglo}, \emph{{On the comparison of spans and bisets}}, {Cah. Topol. G\'eom. Diff\'er. Cat\'eg.} \textbf{62} (2021), no.~1, 63--104.

\bibitem[{Elm}83]{elmendorf}
Anthony~D. {Elmendorf}, \emph{{Systems of fixed point sets}}, {Trans. Amer. Math. Soc.} \textbf{277} (1983), 275--284.

\bibitem[GM24]{guillou-may-ps}
Bertrand {Guillou} and J.~Peter {May}, \emph{{Models of $G$-spectra as presheaves of spectra}}, Algebr. Geom. Topol. \textbf{24} (2024), no.~3, 1225--1275.

\bibitem[{Hau}19]{hausmann-global}
Markus {Hausmann}, \emph{{Symmetric spectra model global homotopy theory of finite groups}}, {Algebr. Geom. Topol.} \textbf{19} (2019), no.~3, 1413--1452.

\bibitem[{Hin}16]{dwyer-kan-revisited}
Vladimir {Hinich}, \emph{{Dwyer-Kan localization revisited}}, {Homology Homotopy Appl.} \textbf{18} (2016), no.~1, 27--48.

\bibitem[HSS00]{hss}
Mark {Hovey}, Brooke {Shipley}, and Jeff {Smith}, \emph{{Symmetric spectra}}, {J. Amer. Math. Soc.} \textbf{13} (2000), no.~1, 149--208.

\bibitem[K{\"o}r18]{koerschgen}
Alexander K{\"o}rschgen, \emph{{A comparison of two models of orbispaces}}, {Homology Homotopy Appl.} \textbf{20} (2018), no.~1, 329--358.

\bibitem[{Len}21]{perm-parsum-categorical}
Tobias {Lenz}, \emph{{Parsummable categories as a strictification of symmetric monoidal categories}}, {Theory Appl. Categ.} \textbf{37} (2021), no.~17, 482--529.

\bibitem[{Len}23]{sym-mon-global}
\bysame, \emph{{On the global homotopy theory of symmetric monoidal categories}}, New York J. Math \textbf{29} (2023), 635--686.

\bibitem[{Len}25a]{g-global}
\bysame, \emph{{{\(G\)}}-global homotopy theory and algebraic {{\(K\)}}-theory}, Mem. Amer. Math. Soc., vol. 1545, Providence, RI: American Mathematical Society (AMS), 2025.

\bibitem[{Len}25b]{swan-two}
\bysame, \emph{{Mackey functors and classical equivariant $K$-theory}}, in preparation, 2025.

\bibitem[{Lur}09]{htt}
Jacob {Lurie}, \emph{{Higher Topos Theory}}, Princeton, NJ: Princeton University Press, 2009, {updated version available at \url{www.math.ias.edu/~lurie/papers/HTT.pdf}}.

\bibitem[{Lur}18a]{higher-algebra}
\bysame, \emph{{Higher Algebra}}, {preprint, available at \url{https://www.math.ias.edu/~lurie/papers/HA.pdf}}, {2018}.

\bibitem[Lur18b]{lurie-SAG}
Jacob Lurie, \emph{{Spectral Algebraic Geometry}}, {under construction (version dated February 2018), \url{www.math.ias.edu/~lurie/papers/SAG-rootfile.pdf}}, 2018.

\bibitem[Mar24]{marc-Noo}
Gregoire Marc, \emph{{A higher {Mackey} functor description of algebras over an $N_{\infty}$-operad}}, preprint, \texttt{{arXiv}:2402.12447}, 2024.

\bibitem[{Mil}17]{miller-burnside}
Haynes {Miller}, \emph{{The Burnside bicategory of groupoids}}, {Bol. Soc. Mat. Mex., III. Ser.} \textbf{23} (2017), no.~1, 173--194.

\bibitem[{Nar}17]{nardin-thesis}
Denis {Nardin}, \emph{{Stability and distributivity over orbital $\infty$-categories}}, Ph.D. thesis, Massachusetts Institute of Technology, 2017, {available at \url{https://dspace.mit.edu/handle/1721.1/112895}}.

\bibitem[{Rez}96]{rezk-cat}
Charles {Rezk}, \emph{{A model category for categories}}, {preprint, available from \url{https://faculty.math.illinois.edu/~rezk/cat-ho.dvi}}, 1996.

\bibitem[{Sch}18]{schwede-book}
Stefan {Schwede}, \emph{{Global homotopy theory}}, {New Math. Monogr.}, vol.~34, Cambridge: Cambridge University Press, 2018.

\bibitem[{Sch}20]{schwede-orbi}
\bysame, \emph{{Orbispaces, orthogonal spaces, and the universal compact Lie group}}, {Math.~Z.} \textbf{294} (2020), 71--107.

\bibitem[{Sch}22]{schwede-k-theory}
\bysame, \emph{{Global algebraic $K$-theory}}, J. Topol. \textbf{15} (2022), no.~3, 1325--1454.

\bibitem[{Seg}74]{segal-gamma}
Graeme~B. {Segal}, \emph{{Categories and cohomology theories}}, {Topology} \textbf{13} (1974), 293--312.

\bibitem[{Sha}20]{sharma}
Amit {Sharma}, \emph{{Symmetric monoidal categories and \(\Gamma\)-categories}}, {Theory Appl. Categ.} \textbf{35} (2020), 417--512.

\bibitem[SS79]{shimada-shimakawa}
Nobuo {Shimada} and Kazuhisa {Shimakawa}, \emph{{Delooping symmetric monoidal categories}}, {Hiroshima Math. J.} \textbf{9} (1979), 627--645.

\bibitem[SS03]{schwede-shipley}
Stefan {Schwede} and Brooke {Shipley}, \emph{{Stable model categories are categories of modules}}, {Topology} \textbf{42} (2003), no.~1, 103--153.

\end{thebibliography}
\end{document}